\newcommand*\bigcdot{\mathpalette\bigcdot@{.5}}
\newcommand*\bigcdot@[2]{\mathbin{\vcenter{\hbox{\scalebox{#2}{$\m@th#1\bullet$}}}}}
\def \[{\begin{equation}}
\def \]{\end{equation}}
\newtheorem{thm}{Theorem}[section]
\newtheorem{claim}{Claim}
\newtheorem{lem}[thm]{Lemma}
\newtheorem{cor}[thm]{Corollary}
\newtheorem{pb}[thm]{Problem}
\begin{document}

\begin{center}
{\Large \bf Spectral extremal graphs 
without intersecting triangles \\ as a minor}
\vspace{4mm}

{\large Xiaocong He$^1$, Yongtao Li$^{1,2}$,  Lihua Feng$^1$\footnote{Corresponding author. 
This work was supported by NSFC (Nos. 12271527, 12071484, 11931002), Natural Science Foundation
of Hunan Province (Nos. 2020JJ4675, 2021JJ40707) and the Fundamental Research Funds for the Central Universities of Central South University (Grant No. 2021zzts0034).  \\ \hspace*{5mm}{Email addresses}: hexc2018@qq.com (X. He), 
ytli0921@hun.edu.cn (Y. Li), fenglh@163.com (L. Feng).}}
\vspace{3mm}

$^1$School of Mathematics and Statistics, HNP-LAMA, Central South University \\
Changsha, Hunan, 410083, P.R. China \\ 
{ $^2$School of Mathematics, Hunan University} \\
{ Changsha, Hunan, 410082, P.R. China }  \\
\end{center}


\noindent {\bf Abstract}: \
Let $F_s$ be the friendship graph obtained from $s$ triangles by sharing a common vertex. 
For fixed $s\ge 2$ and sufficiently large $n$, 
the $F_s$-free graphs of order $n$ which attain the maximal spectral radius was firstly characterized by Cioab\u{a}, Feng, Tait and Zhang [Electron. J. Combin. 27 (4) (2020)],
and later uniquely determined by Zhai, Liu and Xue [Electron. J. Combin. 29 (3) (2022)].  
Recently, the spectral extremal problems was widely studied 
for graphs containing no $H$ as a minor. 
For instance, Tait [J. Combin. Theory Ser. A 166 (2019)], 
Zhai and Lin [J. Combin. Theory Ser. B 157 (2022)]
solved the case $H=K_r$ and $H=K_{s,t}$, respectively. 
Motivated by these results, 
we consider the spectral extremal problems  
in the case $H=F_s$. 
We shall prove that  $K_s \vee I_{n-s}$ is the unique graph that attain the maximal spectral radius over all $n$-vertex $F_s$-minor-free graphs.
Moreover, let $Q_t$ be the graph obtained from $t$ copies of 
the cycle of length $4$  
by sharing a common vertex. 
We also determine the unique $Q_t$-minor-free graph attaining the maximal spectral radius. Namely, 
$K_t \vee M_{n-t}$, where $M_{n-t}$ 
is a graph obtained from an independent set of order $n-t$  
by embedding a matching consisting of $\lfloor \frac{n-t}{2}\rfloor$ 
edges.

\vspace{2mm} \noindent{\bf Keywords}: Spectral radius; Intersecting cycles; Minor-free; Extremal graph. 

\vspace{2mm} \noindent{\bf AMS classification}: 05C50, 05C35.
\vspace{2mm}

\setcounter{section}{0}
\section{Introduction} 
\setcounter{equation}{0}

Let $G$ be a graph with vertex set $V(G)=\{v_1,\dots,v_n\}$ and edge set $E(G)=\{e_1,e_2,\ldots ,e_m\}$. 
We write $G-v$ for the graph obtained from $G$ by deleting vertex $v \in V(G)$ and its incident edges, 
and $G-uv$ for the graph by deleting the edge $uv \in E(G)$.
 This notation is naturally extended if more than one vertex or edge is deleted. Similarly, $G+uv$ is obtained from $G$ by adding an edge $uv\notin E(G)$. As usual, 
a \textit{complete graph} on $n$ vertices is denoted by $K_n$, and an {\it independent set} on $n$ vertices 
is denoted by $I_n$. 
We write $P_t$ for a {\it path} on $t$ vertices. 
Commonly, we  refer to a path by the nature sequence of its vertices, say $P_t=x_1x_2\ldots x_t$, and call $P_t$ a path starting from $x_1$ to $x_t$. 
In addition, we write $C_t$ for a {\it cycle} on $t$ vertices, 
and write $K_{s,t}$ for the \textit{complete bipartite graph} 
with two parts of sizes $s$ and $t$. 
We denote by $G[X, Y]$ the  \textit{bipartite subgraph} with bipartition ($X, Y$) whose 
edges are that of $G$ between $X$ and $Y$.

The \textit{adjacency matrix} $A(G)=(a_{ij})$ of $G$ is an $n\times n$ matrix with $a_{ij}=1$ if $v_i$ is adjacent to $v_j$, and $0$ otherwise. The \textit{spectral radius} of $G$ is the largest eigenvalue of $A(G)$, which is denoted by $\rho(G)$. For each vertex $v$ in $G$, let $N_G(v):=\{u\in V(G): uv\in E(G)\}$ and $N_G[v]:=N_G(v)\cup\{v\}$. The \textit{degree} of $v$ is denoted by $d_G(v)=|N_G(v)|$. Similarly, for each subgraph $H$ of $G$, let $N_G(H)$ be the set of vertices in $V(G)\setminus V(H)$ that are adjacent to some vertex of $H$.

A \textit{clique} of $G$ is a subset $S$ of $V(G)$ such that $G[S]$ is a complete subgraph. Let $G_1$ and $G_2$ be two disjoint graphs, denote by $G_1\cup G_2$ the \textit{vertex-disjoint union} of $G_1$ and $G_2$. 
For simplicity, we write $sG$ for the vertex-disjoint union 
of $s$ copies of $G$. 
The \textit{join} $G_1\vee G_2$  is obtained from $G_1\cup G_2$ by joining each vertex of $G_1$ to each vertex of $G_2$. Moreover, we denote by $M_t$ the disjoint union of $\lfloor\frac{t}{2}\rfloor$ copies of $K_2$ and $\lceil\frac{t}{2}\rceil-\lfloor\frac{t}{2}\rfloor$ isolated vertex (maybe no isolated vertex). For example, if $t=2s$ for some $s\ge 1$, then $M_t=sK_2$; 
if $t=2s+1$, then $M_s=K_1 \cup sK_2$. 
For graph notation and terminology undefined here, readers are referred to \cite{BM}.

\subsection{Spectral extremal graphs for friendship graphs}

A graph $G$ is called $H$-free if $H$ is not a subgraph of $G$. In 2010, Nikiforov \cite{Nikiforov10} proposed a spectral extremal problem, which is now known as Brualdi-Solheid-Tur\'{a}n type problem. 
More precisely, what is the maximum spectral radius 
among all $n$-vertex $H$-free graphs?  
In the past few decades, the problem has been investigated by many researchers for various graphs $H$, such as, the complete graphs \cite{NikiforovKr,wilf,LP2022second}, 
the complete bipartite graphs \cite{babai,za}, the books and theta graphs \cite{zhaigraph}, 
the friendship graphs \cite{cioaba2,ZLX2022,ZHG21}, 
the intersecting odd cycles \cite{LP2021,CLZ2021}, 
the intersecting cliques \cite{DKLNTW2021}, 
the paths and linear forests \cite{Nikiforov10,chen}, 
the odd wheels \cite{cioaba1}, 
the quadrilateral \cite{NikiforovKr,zhaic4}, 
the hexagon \cite{zhaic6}, 
the short odd cycles \cite{LG2021,LSY2022,LP2022oddcycle}, 
the square of a path \cite{zyh}, 
the fan graph \cite{WZ2022dm}. 
We refer the reader to \cite{NikifSurvey} for a comprehensive survey.

Let $F_s$ be the graph 
obtained from $s$ triangles by intersecting in exactly one common vertex. 
Under the definition of join, we have $F_s= K_1 \vee s K_2$.  
The graph $F_s$ is also known as 
the friendship graph because it is the only extremal graph in the
famous Friendship Theorem \cite[Chapter 43]{AZ2014}, 
which asserts that if $G$ is a graph on $n$ vertices such that 
any two distinct vertices have exactly one common neighbor, 
then $n$ is odd and $G$ consists of $\frac{n-1}{2}$ triangles 
intersecting in a common vertex. 
The extremal problem involving $F_s$ was widely studied in the literature. 
Tracing back to 1995, Erd\H{o}s, F\"{u}redi,
Gould and Gunderson \cite{Erdos95} proved  the following
result.

\begin{thm}[Erd\H{o}s--F\"{u}redi--Gould--Gunderson \cite{Erdos95}, 1995]
\label{thmErdos95}
Let $s \geq 1$ and $n\geq 50s^2$ be positive integers. 
If $G$ is an $F_s$-free graph on $n$ vertices, then 
\begin{equation*}
e(G) \le \left\lfloor \frac {n^2}{4}\right \rfloor+ \left\{
  \begin{array}{ll}
   s^2-s, \quad~~  \mbox{if $s$ is odd,} \\
    s^2-\frac{3}{2} s, \quad \mbox{if $s$ is even}.
  \end{array} \right. 
\end{equation*}
\end{thm}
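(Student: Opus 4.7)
The plan is to exploit a clean local reformulation of $F_s$-freeness. Writing $F_s = K_1 \vee sK_2$, a graph $G$ is $F_s$-free if and only if for every vertex $v \in V(G)$ the induced subgraph $G[N(v)]$ contains no matching of size $s$, equivalently $\nu(G[N(v)]) \le s-1$. Any copy of $F_s$ singles out an apex whose neighborhood hosts $s$ vertex-disjoint edges, and the converse is immediate. This transfers the forbidden-subgraph problem on $G$ to a matching-forbidden problem in each neighborhood.

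Once this observation is in place, I would feed each neighborhood into the Erd\H{o}s--Gallai matching theorem: any graph on $m$ vertices with $\nu \le s-1$ has at most $\max\bigl\{\binom{2s-1}{2},\,(s-1)(m-s+1) + \binom{s-1}{2}\bigr\}$ edges. Under the hypothesis $n \ge 50s^2$, the second expression is the binding one for every high-degree vertex, giving $e(G[N(v)]) \le (s-1)\,d(v) + O(s^2)$.

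Summing via the identity $\sum_v e(G[N(v)]) = 3\,t(G)$ then controls the triangle count: $t(G) \le \tfrac{2(s-1)}{3}\,e(G) + O(ns^2)$. This can be compared with a supersaturation lower bound on triangles (Lov\'asz--Simonovits, or a first-moment book-counting argument): if $e(G) = \lfloor n^2/4\rfloor + q$ with $q > 0$, then $t(G) \ge q \lfloor n/2 \rfloor$. The two inequalities together force $q = O(s^2)$, which already recovers the theorem up to the constant.

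Sharpening this to the exact constants $s^2-s$ for odd $s$ and $s^2-\tfrac{3}{2}s$ for even $s$ is the hard part. For this I would invoke a stability refinement: once $e(G)$ is within $O(s^2)$ of $\lfloor n^2/4 \rfloor$, $G$ is within $O(s^2)$ edge modifications of $K_{\lfloor n/2\rfloor,\,\lceil n/2\rceil}$ by Erd\H{o}s--Simonovits stability. The ``extra'' edges inside each part must themselves form matching-restricted subgraphs, since otherwise they combine with a high-degree vertex on the opposite side to produce an $F_s$; a parity-sensitive optimization of the Erd\H{o}s--Gallai bound on each side then yields the precise constants. Matching these with the known extremal constructions---$K_{\lfloor n/2\rfloor,\,\lceil n/2\rceil}$ augmented by a carefully chosen small graph on each side---requires a delicate parity case split, where I expect the bulk of the technical work to live.
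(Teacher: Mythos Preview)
The paper does not contain a proof of this theorem: Theorem~\ref{thmErdos95} is quoted from \cite{Erdos95} purely as background for the spectral problems that follow, and no argument for it appears anywhere in the text. There is therefore no ``paper's own proof'' to compare your proposal against.

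Independently of that, your sketch has a real gap at the step where you combine the neighbourhood bound with supersaturation. From $3t(G)\le 2(s-1)e(G)+O(ns^{2})$ and the Lov\'asz--Simonovits bound $t(G)\ge q\lfloor n/2\rfloor$ one obtains only
\[
q\Bigl\lfloor \frac{n}{2}\Bigr\rfloor \;\le\; \frac{2(s-1)}{3}\Bigl(\frac{n^{2}}{4}+q\Bigr)+O(ns^{2}),
\]
hence $q\le \frac{(s-1)n}{3}+O(s^{2})$, which is linear in $n$ rather than $O(s^{2})$. This is not merely slack in the estimate: take $K_{\lfloor n/2\rfloor,\lceil n/2\rceil}$ and embed in one side a triangle-free graph $H$ with about $(s-1)n/3$ edges (say a large matching). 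Every triangle of the resulting graph pairs an edge of $H$ with a vertex of the opposite side, so $t=e(H)\cdot\lceil n/2\rceil$ and your triangle inequality is satisfied, yet $q\approx (s-1)n/3$. That graph is of course not $F_{s}$-free, but the two numerical inequalities you wrote down cannot detect this. Getting from $q=O(sn)$ down to $q=O(s^{2})$ therefore requires a genuinely structural use of $F_{s}$-freeness---in \cite{Erdos95} this is done from the outset, not as a cosmetic refinement after a coarse bound---so the stability step cannot be postponed the way your outline suggests.
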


We write $\mathrm{Ex}(n,F_s)$ for
 the set of $n$-vertex $F_s$-free graphs
with the maximum number of edges.  
The extremal graphs in $\mathrm{Ex}(n,F_s)$ 
were also characterized  by  Erd\H{o}s, F\"{u}redi,
Gould and Gunderson \cite{Erdos95}. 
Roughly speaking, they are constructed from 
the balanced complete bipartite graph $K_{\lfloor \frac{n}{2}\rfloor, \lceil \frac{n}{2} \rceil }$ by embedding a ``small 
graph'' in any one side. 
More precisely, for odd $s$,
the  graphs $G\in \mathrm{Ex}(n,F_s)$ are obtained from 
 $K_{\lfloor \frac{n}{2}\rfloor, \lceil \frac{n}{2} \rceil }$
by embedding two vertex-disjoint copies of the complete graph $K_s$ in one side. 
 For even $s$, the extremal graphs
 are constructed from $K_{\lfloor \frac{n}{2}\rfloor, \lceil \frac{n}{2} \rceil }$ by embedding
 a graph with $2s-1$ vertices, $s^2-\frac{3}{2} s$ edges and maximum degree $s-1$ in one side.   
 Here, we remark  that for even $s$, the embedded graph is a nearly $(s-1)$-regular graph
on $2s-1$ vertices with degree
 sequence $(s-1,\ldots ,s-1,s-2)$. 
 It is known \cite{BM} that such a graph does exist 
 and it is not unique for every even $s\ge 2$.

In 2020, 
the spectral version of Theorem \ref{thmErdos95} was studied by 
Cioab\u{a}, Feng, Tait and Zhang \cite{cioaba2}. 
They characterized the spectral extremal $F_s$-free graphs. 
More precisely, they proved that for fixed $s\ge 2$ and sufficiently large $n$, if $G$ is an $F_s$-free graph of  order $n$ with maximal spectral radius, then $G$ attains the maximum number of edges over all $n$-vertex $F_s$-free graphs.

 \begin{thm}[Cioab\u{a}--Feng--Tait--Zhang \cite{cioaba2}, 2020]    \label{thmCFTZ20}
Let $s\ge 2$ and $G$ be an $F_s$-free graph on $n$ vertices.
For  sufficiently large $n$, if $G$ has the maximal spectral radius, then
$$  G \in  \mathrm{Ex}(n,F_s). $$
\end{thm}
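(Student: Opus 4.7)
The plan is to show that any $F_s$-free graph $G$ on $n$ vertices attaining the maximum spectral radius $\rho := \rho(G)$ must in fact attain the maximum possible edge count, so that $G \in \mathrm{Ex}(n, F_s)$ follows directly from Theorem~\ref{thmErdos95}. The first ingredient is a spectral lower bound: since any edge-extremal graph $G_0 \in \mathrm{Ex}(n, F_s)$ is itself $F_s$-free, the spectral maximality of $G$ gives $\rho \geq \rho(G_0) > n/2$, where the strict inequality holds because $G_0$ properly contains the balanced bipartite graph $K_{\lfloor n/2 \rfloor, \lceil n/2 \rceil}$. This estimate says that $G$ is at least as spectrally dense as the balanced complete bipartite graph.

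Next, I would exploit spectral stability. Because $F_s$ has chromatic number $3$, stability results in the spirit of Erd\H{o}s--Simonovits and Nikiforov translate the bound $\rho > n/2$ into an approximate structure: there is a bipartition $V(G) = V_1 \cup V_2$ with $|V_1|, |V_2| = (1 + o(1))\, n/2$ such that $e(V_1) + e(V_2) = o(n^2)$. Using that $F_s$-freeness forces the matching number in every vertex neighborhood to be at most $s-1$, I would sharpen this to $e(V_1) + e(V_2) = O(s^2)$, matching the form of the bound in Theorem~\ref{thmErdos95}.

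The heart of the argument is a Perron eigenvector analysis. Let $\mathbf{x}$ be the Perron eigenvector of $G$, normalized so that $\max_v x_v = x_{u^*} = 1$. Using the eigenvalue equation $\rho x_v = \sum_{w \sim v} x_w$ together with the approximate bipartite structure, I would first show that $u^*$ has degree $n - O(1)$, then iteratively identify a small core set $L$ (of size bounded in terms of $s$) whose vertices have eigenvector entries close to $1$, and show that every vertex in $S := V(G) \setminus L$ has entry close to some common value $\alpha \approx 1/2$. Careful comparison of entries forces $G[L, S]$ to be complete bipartite and $G[S]$ to be essentially edgeless. Finally, an edge-switching argument (any ``misplaced'' edge can be moved to strictly increase $\rho$ while preserving $F_s$-freeness) rules out all remaining deviations from the structure described after Theorem~\ref{thmErdos95}, forcing $e(G)$ to match the bound of Theorem~\ref{thmErdos95} and hence placing $G$ in $\mathrm{Ex}(n, F_s)$.

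The most technical obstacle is the fine-grained eigenvector bookkeeping needed to pin down the exact structure on the ``small side''. The parity split for $s$ is delicate, because the edge-extremal configurations differ substantially: for odd $s$, the embedded piece on one side is two vertex-disjoint copies of $K_s$, while for even $s$ it is a nearly $(s-1)$-regular graph on $2s-1$ vertices. Ruling out all competing near-extremal structures via local edge moves, without losing control of the eigenvector entries or reintroducing a copy of $F_s$, is the step where the most care is required.
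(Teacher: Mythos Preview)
The paper does not prove Theorem~\ref{thmCFTZ20}; it is quoted in the introduction as a result of Cioab\u{a}, Feng, Tait and Zhang \cite{cioaba2} and is used only as background motivation. The paper's own contributions are Theorems~\ref{thm1} and~\ref{thm2} on $F_s$-minor-free and $Q_t$-minor-free graphs, and the proofs in Sections~3--6 address those, not the $F_s$-subgraph-free result you are sketching. So there is no proof in this paper to compare your proposal against.

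That said, your outline is broadly in the spirit of how results of this type (including the original argument in \cite{cioaba2}) are obtained: a lower bound on $\rho$ from the candidate extremal graph, spectral stability to get an approximate bipartition, Perron-eigenvector analysis to locate high-entry vertices and control degrees, and edge-switching to rule out suboptimal configurations. A few of your intermediate claims are imprecise as stated: for instance, it is not the case that all of $S$ has entries near a single value $\alpha\approx 1/2$ (the embedded side behaves differently), and ``$G[S]$ essentially edgeless'' is too strong since the extremal graphs carry $\Theta(s^2)$ edges inside one part. You correctly flag the parity-dependent endgame as the delicate step, but as written your sketch stops short of a mechanism that actually forces the exact edge count; in \cite{cioaba2} this is done by a careful comparison of $\rho$ across competing configurations rather than a single switching move.
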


In 2022, Zhai, Liu and Xue \cite{ZLX2022} provided a further  characterization of $G$ and
determined  the {\it unique} spectral extremal graph of $F_s$ 
for sufficiently large $n$. 
In other words, they determined the unique embedded 
subgraph in the extremal graph of $\mathrm{Ex}(n,F_s)$. 
Let $H^*$ be the graph of order $2s-1$ 
with vertex set $V(H^*)=\{w_0\} \cup A \cup B$
such that $N(w_0)=A$ and $|B|=|A| +2=s$. 
Then we partition $A$ into $A_1\cup A_2$,
and $B$ into $\{u_0\}\cup B_1\cup B_2$
such that $|A_1|=|A_2|=|B_2|=\frac{s-2}{2}$ and $|B_1|=\frac{s}{2}$.
Finally, we join $s-1$ edges from $u_0$ to $A_1\cup B_1$,
$\frac{s-2}{2}$ independent edges between $B_2$ and $A_2$,
and some additional edges such that both $A$ and $B_1\cup B_2$ are cliques. 

\begin{thm}[Zhai--Liu--Xue \cite{ZLX2022}, 2022] 
Let $s\ge 2$ and $G$ be an $F_s$-free graph with 
the maximal spectral radius. Then for sufficiently large $n$, 
the graph $G$ is obtained from $K_{\lfloor \frac{n}{2}\rfloor, \lceil \frac{n}{2} \rceil }$ by embedding a graph $H$ in the part of size
$\lfloor \frac{n}{2}\rfloor$, where $H=K_s \cup K_s$ if $s$ is odd;  and $H=H^*$ if $s$ is even. 
\end{thm}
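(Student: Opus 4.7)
The plan is to combine Theorem~\ref{thmCFTZ20} with the Erd\H{o}s--F\"{u}redi--Gould--Gunderson structural characterization \cite{Erdos95} to reduce to a finite family of edge-extremal candidates, and then single out the unique spectral maximizer via Perron eigenvector analysis. By Theorem~\ref{thmCFTZ20}, for $n$ sufficiently large every spectral extremal $F_s$-free graph $G$ lies in $\mathrm{Ex}(n,F_s)$, so by \cite{Erdos95} we may write $G = K_{\lfloor n/2 \rfloor,\lceil n/2 \rceil}+H$ with $H$ embedded inside one bipartition class. For odd $s$, the embedded graph is already uniquely $H = K_s\cup K_s$ from \cite{Erdos95}; for even $s$, $H$ runs over the nearly $(s-1)$-regular graphs on $2s-1$ vertices with $s^2-\tfrac{3}{2}s$ edges, and the bulk of the work is to pick out $H^*$ among these.

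My first step is to show that $H$ must be placed on the smaller bipartition class, of size $\lfloor n/2 \rfloor$. Let $x$ be the Perron eigenvector of $G$ with $\rho = \rho(G)$. By symmetry of the ``clean'' side, $x$ is constant there with some value $\beta$, and constant on the embedded side outside $V(H)$ with value $\alpha=|B|\beta/\rho$. Comparing $G$ with the graph $G^{\prime}$ obtained by relocating $H$ to the opposite side via the Rayleigh quotient, the fact that the Perron entries of $K_{\lfloor n/2\rfloor,\lceil n/2\rceil}$ are strictly larger on the smaller side (in the ratio $\sqrt{\lceil n/2\rceil/\lfloor n/2\rfloor}>1$ whenever $n$ is odd) yields $\rho(G^{\prime})>\rho(G)$ whenever $H$ lies on the larger side. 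For odd $s$ this completes the proof, since $H=K_s\cup K_s$ was already forced; for even $s$ one must still separate $H^*$ from its competitors.

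The eigenvalue equation $\rho x_v = |B|\beta + \sum_{u\in N_H(v)} x_u$ for $v\in V(H)$ exhibits the restriction of $x$ to $V(H)$ as the solution of a local perturbed eigenvalue problem driven by the large inhomogeneous forcing $|B|\beta$. I would then run a sequence of local edge-switching modifications inside $V(H)$, where each switch changes $\rho$ by a quantity controlled by differences of the form $x_u x_v - x_{u^{\prime}}x_{v^{\prime}}$ across the swapped pair of edges. Successive monotonic improvements should concentrate the edges of $H$ into two cliques, one on $A$ and one on $B_1\cup B_2$, attach the unique degree-$(s-2)$ vertex $w_0$ only to $A$, install the distinguished joining vertex $u_0 \sim A_1\cup B_1$, and realize the matching between $A_2$ and $B_2$. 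The main obstacle is this final separation: since the leading-order Perron values on $V(H)$ are all comparable, strict inequality must be extracted from next-order corrections in $1/n$, and these corrections are sensitive both to the precise placement of $w_0$ and to whether $A$ and $B_1\cup B_2$ are already cliques. Carrying out this sensitivity analysis uniformly in $n$ is the technical heart of the argument, and its endpoint is $H = H^*$ as claimed.
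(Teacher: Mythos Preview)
This statement is not proved in the present paper at all: it is quoted in the introduction as a result of Zhai, Liu and Xue \cite{ZLX2022}, and the paper immediately writes ``We refer the readers to \cite{ZLX2022} for more details.'' There is therefore no proof here against which to compare your proposal. The paper's own contributions are Theorems~\ref{thm1} and~\ref{thm2}, concerning $F_s$-minor-free and $Q_t$-minor-free graphs, and their proofs use a completely different machinery (Tait's framework of large-entry vertices $L$ versus small-entry vertices $S$, plus minor-structure lemmas) that does not touch the $F_s$-subgraph-free problem.

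As to your outline on its own merits: the reduction to $\mathrm{Ex}(n,F_s)$ via Theorem~\ref{thmCFTZ20} and the structural description from \cite{Erdos95} is correct and is indeed how \cite{ZLX2022} begins. Your side-selection argument is plausible but underspecified: you invoke the Perron entries of the bare $K_{\lfloor n/2\rfloor,\lceil n/2\rceil}$, yet the Rayleigh comparison must be carried out with the Perron vector of the \emph{embedded} graph, whose entries on $V(H)$ are perturbed; the inequality still goes through, but not for the one-line reason you give. The real gap is the even-$s$ case: you correctly identify that distinguishing $H^*$ from the other nearly-$(s-1)$-regular graphs on $2s-1$ vertices requires lower-order terms in the eigenvector expansion, but ``successive monotonic improvements should concentrate the edges'' is a hope, not an argument. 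In \cite{ZLX2022} this step is handled by a careful quantitative expansion of $\rho$ and a specific sequence of edge switches with explicit sign control; your sketch does not yet indicate which switches to perform or why each one strictly increases $\rho$.
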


We refer the readers to \cite{ZLX2022} for more details 
and \cite{WKX2023} for a generalization of Theorem \ref{thmCFTZ20}. 
Furthermore, 
Li, Lu and Peng \cite{2022LLP} 
got rid of the condition that $n$ is sufficiently large if $s=2$. 
They proved that the unique $n$-vertex 
$F_2$-free spectral extremal graph is  the balanced complete bipartite graph adding an edge in the vertex part with  smaller size if $n\ge 7$.
Moreover,  it was also proved in \cite{2022LLP} that the
unique $m$-edge $F_2$-free spectral extremal graph is the join of $K_2$
with an independent set of $\frac{m-1}{2}$ vertices if $m\ge 8$, and the conditions $n\ge 7$ and $m\ge 8$ are tight.

\subsection{Spectral extremal graphs forbidding minors}

A graph $H$ is a \textit{minor} of $G$ if a graph isomorphic to $H$ can be obtained from $G$ by the following operations: vertex deletion, edge deletion and edge contraction. A graph $G$ is \textit{$H$-minor-free} if it does not contain $H$ as a minor.

A natural question at the intersection of graph minor theory and Brualdi-Solheid-Tur\'{a}n type problem asks, for a given graph $H$, what is the maximal spectral radius over all $n$-vertex graphs which do not contain $H$ as a minor? 
Indeed, such a problem was recently becoming popular and 
some elegant results have been published in the following two aspects. 

There are two famous conjectures in the study of spectral extremal problems on planar and outerplanar graphs. 
It is known that a graph is planar if and only if it is $\{K_5,K_{3,3}\}$-minor-free. Moreover, a graph is outerplanar if and only if it is $\{K_4,K_{2,3}\}$-minor-free; 
see, e.g., \cite{BM}.  In 1990, Cvetkovi\'c and Rowlinson \cite{Cvetkovi} conjectured that the join graph $K_1+P_{n-1}$ is the unique graph attaining the maximal spectral radius over all outerplanar graphs of order $n$. 
In 1991, Boots and Royle \cite{Boots}, and independently Cao and Vince \cite{cao}, proposed a spectral problem 
for planar graphs, which conjectured that 
$K_2+P_{n-2}$ is the unique graph attaining 
the maximal spectral radius over all planar graphs  of order $n$. 
Many scholars contributed to these two conjectures. 
In particular, Tait and Tobin \cite{tait2017} confirmed these  conjectures for sufficiently large $n$. In 2021, Lin and Ning \cite{lhq} confirmed Cvetkovi\'{c}-Rowlinson conjecture for all $n\ge 2$ except for $n=6$. 

Apart from the planar and outerplanar graphs, 
it is natural to consider the maximal spectral radius 
for $H$-minor-free graphs with a specific graph $H$. 
In particular, setting $H$ as the complete graph $K_r$ and the complete bipartite graph $K_{s,t}$.  
In 2004, Hong \cite{Hong} determined the extremal graph with maximum spectral radius for $K_5$-minor-free graphs. In 2017, Nikiforov \cite{Nikiforov2017} obtained a sharp upper bound on the spectral radius of $K_{2,t}$-minor-free graphs. In 2019, Tait \cite{Tait2019} characterized the spectral extremal graphs with no $K_r$ as a minor.  
In 2022, Zhai and Lin \cite{zhaijctb} determined  completely 
the spectral extremal graphs for $K_{s,t}$-minor.

Comparing with the rich development of 
the traditional spectral extremal problem (listed in Subsection 1.1), 
there are few spectral results for minor-free graphs  
although the spectral problem of minors 
has risen in popularity in the past few years.  
Recall that $F_s$ is the friendship graph consisting of $s$ triangles sharing a common vertex. 
As stated in previous subsection, the traditional problem 
for the friendship graph $F_s$ 
has recently received extensive attention and investigation; 
see, e.g., \cite{cioaba2,ZLX2022,WKX2023}. 
Inspired by the results of $K_r$-minor and $K_{s,t}$-minor, 
we shall present one more result for 
$H$-minor-free problem by putting $H=F_s$. 
We will determine the largest spectral radius of a graph over all $F_s$-minor-free graphs of order $n$, 
and we show that $K_s\vee I_{n-s}$ is 
the unique spectral extremal graph.

\begin{thm}\label{thm1}
Let $s\ge 1$ be an integer and $G$ be an $F_s$-minor-free graph of order $n$. 
Then for sufficiently large $n$, we have 
$$\rho(G)\leq \rho(K_{s}\vee {I_{n-s}}),$$
with equality if and only if $G =  K_{s}\vee {I_{n-s}}$.
\end{thm}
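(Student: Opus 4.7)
The plan is to follow the spectral extremal template developed by Tait for the $K_r$-minor problem and by Zhai--Lin for the $K_{s,t}$-minor problem: first establish a sharp lower bound on the spectral radius, then extract a small ``core'' of vertices carrying almost all of the spectral weight via the Perron eigenvector, and finally use the $F_s$-minor-free hypothesis to rule out every deviation from $K_s\vee I_{n-s}$. As a preliminary step I would verify that $K_s\vee I_{n-s}$ is itself $F_s$-minor-free by a pigeonhole count on branch sets: in any realisation of an $F_s$-minor in $K_s\vee I_{n-s}$, the center's branch set and each of the $s$ triangle-edge branch pairs must contain a vertex of $K_s$ (since $I_{n-s}$ is edgeless), demanding $s+1$ distinct clique vertices from a clique of size $s$. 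Using the equitable partition $(V(K_s),V(I_{n-s}))$ one then obtains $\rho_0:=\rho(K_s\vee I_{n-s})\geq\sqrt{s(n-s)}$, so any extremal graph $G^*$ satisfies $\rho^*:=\rho(G^*)\geq\rho_0$.

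The combinatorial engine driving the structural analysis is the following petal lemma: if $G$ is $F_s$-minor-free, then for every vertex $v$, the graph $G-v$ does not contain $s$ pairwise vertex-disjoint connected subgraphs $H_1,\dots,H_s$ with $|V(H_i)\cap N_G(v)|\geq 2$ for every $i$. Indeed, contracting each $H_i$ along a path between two of its $N_G(v)$-vertices yields a triangle through $v$, and the $s$ resulting triangles share only $v$, producing $F_s$ as a minor. This is the sole combinatorial input needed; together with the spectral lower bound it will govern all subsequent structural deductions.

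The main analytic step is as follows. Let $x$ be the Perron eigenvector of $G^*$ normalised so that $\max_v x_v=1$, attained at $u^*$. From $\rho^* x_{u^*}=\sum_{w\sim u^*}x_w$ one immediately obtains $d(u^*)\geq\rho^*\geq\sqrt{s(n-s)}$. Fix a small $\varepsilon>0$ and set $L:=\{v:x_v\geq\varepsilon\}$. Using the two-step identity $\rho^{*2}x_{u^*}=\sum_{w\sim u^*}\sum_{z\sim w}x_z$ together with the lower bound on $\rho^*$, I would show that the spectral weight is concentrated on $L$ and that $|L|=O(s)$. The petal lemma then forces $|L|\leq s$: otherwise, picking any $\ell\in L$, the remaining vertices of $L$ combined with generic low-entry common neighbours of $L$ yield $s$ pairwise vertex-disjoint connected pieces in $G^*-\ell$, each meeting $N_{G^*}(\ell)$ in at least two vertices, contradicting the lemma. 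A matching quantitative lower bound $|L|\geq s$ comes from the spectral estimate, and one further application of the petal lemma shows that $L$ induces a clique. Once $|L|=s$ and $L$ is a clique, I would finish by showing $V(G^*)\setminus L$ is independent (an edge outside $L$ together with $L$ and spare common neighbours again exhibits $F_s$ as a minor) and completely joined to $L$ (otherwise Perron monotonicity would let one add a missing edge and strictly increase $\rho$, while the enlarged graph remains a subgraph of $K_s\vee I_{n-s}$ and hence $F_s$-minor-free), forcing $G^*=K_s\vee I_{n-s}$.

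The principal obstacle is the pinpoint step establishing $|L|=s$ together with the clique structure on $L$. The upper bound requires carefully constructing $s$ pairwise vertex-disjoint connecting paths through generic low-entry neighbours in $G^*-\ell$, and this is precisely where the hypothesis that $n$ is sufficiently large enters, ensuring an abundance of low-entry vertices having at least two neighbours in $L$. The matching lower bound and the clique property hinge on delicate spectral estimates that must be sharp enough to absorb the $o(1)$ gap between $\rho^*$ and $\rho_0$ as $n\to\infty$.
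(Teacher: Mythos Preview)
Your overall template matches the paper's: bound $\rho$ below by $\sqrt{s(n-s)}$, split $V(G)$ by Perron entries, locate $s$ dominant vertices, and finish with a structural/switching argument. Your petal lemma is correct and is essentially the mechanism behind the paper's Lemma~\ref{L-minor2}. However, two steps in your sketch are genuine gaps rather than routine omissions.

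\textbf{The size of $L$ and the passage to $|L|\le s$.} From $e(G)=O(n)$ and $\rho^*\ge \sqrt{s(n-s)}$ the standard counting
\[
\rho^*\epsilon\,|L|<\sum_{v\in L}\rho^* x_v\le \sum_{v\in L}d(v)\le 2e(G)
\]
only yields $|L|=O(\sqrt{n}/\epsilon)$, not $|L|=O(s)$; the two-step identity does not improve this. More importantly, your petal-lemma step needs each of the chosen vertices of $L$ to have degree $(1-o(1))n$, so that they share a huge common neighbourhood from which you can draw the disjoint connecting paths. But membership in $L$ only says $x_v\ge\epsilon$, and the degree transfer ``$x_v$ large $\Rightarrow$ $d(v)$ large'' (the paper's Claim~2) gives $d(v)\ge (1-C(\alpha+\epsilon))n$ when $x_v=1-\alpha$; for $x_v=\epsilon$ this says nothing. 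The paper does \emph{not} try to bound $|L|$ sharply. Instead it runs an induction (Claim~3): having found $v_1,\dots,v_k$ with $x_{v_i}\ge 1-c_k\epsilon$, it uses the tight inequality $s(n-s)\le \rho^{*2}\le 2\epsilon e(S)+2e(L)+\epsilon e(L,S)+e(U,S)+\sum_{uv\in E(L\setminus U,S)}x_u$ with $U=\{v_1,\dots,v_k\}$ to force an averaging vertex $v_{k+1}\in L\setminus U$ with entry close to $1$, and then invokes Claim~2 to get its degree close to $n$. This bootstrapping is the technical core, and your sketch bypasses it.

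\textbf{The clique on $L$.} You assert that ``one further application of the petal lemma shows that $L$ induces a clique''. The petal lemma is an obstruction (too many petals $\Rightarrow$ $F_s$-minor); it does not force edges to exist inside $L$. The paper instead proves a dedicated structural result (Lemma~\ref{edge-minor-2}): once one has $A$ with $|A|=s$ and $|B|=|\bigcap_{i}N(v_i)|\ge (1-\delta)n$, adding all missing edges inside $A$ preserves $F_s$-minor-freeness. The clique property of $A$ then follows from spectral maximality, not from a forbidden-minor argument. Your outline needs a replacement for this step; the petal lemma alone will not supply it.
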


 Moreover, we would like to consider a more general problem 
 for intersecting cycles, rather than triangles. 
In particular, we shall study the spectral extremal problem
 for graphs with no intersecting quadrilaterals (cycles with length $4$) 
 as a minor. 
  Let $Q_t$ be the graph obtained from $t$ copies of $C_4$   
by sharing a common vertex. 
The second result in this paper determines the unique spectral extremal graphs among 
all $Q_t$-minor-free graphs. 
Recall that $M_{n-t}$ can be obtained from 
an independent set on $n-t$ vertices by embedding a maximal 
matching. 
In other words, we have $M_{n-t}= \frac{n-t}{2}K_2$ if $n-t$ is even; 
and $M_{n-t}=K_1\cup \frac{n-t-1}{2}K_2$ if $n-t$ is odd.

\begin{thm}\label{thm2}
Let $t\ge 1$ be an integer and 
$G$ be a $Q_t$-minor-free  graph of order $n$. 
Then for sufficiently large $n$, we have
$$\rho(G)\leq \rho(K_t\vee M_{n-t}),$$
with equality if and only if $G =  K_t\vee M_{n-t}$.
\end{thm}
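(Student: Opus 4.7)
The plan parallels that of Theorem \ref{thm1}. Let $G$ be an $n$-vertex $Q_t$-minor-free graph of maximum spectral radius. A direct branch-set analysis first shows $K_t\vee M_{n-t}$ is itself $Q_t$-minor-free: any central branch set $B_0$ intersecting $V(K_t)$ leaves fewer than $t$ disjoint $P_3$-minors in $G-B_0$ (since every $P_3$ must use a $K_t$-vertex outside $B_0$), while any $B_0$ contained in $V(M_{n-t})$ has at most $t+1$ external neighbours and so, by pigeonhole, cannot meet the $2t$ required endpoint branch sets (for $t\ge 2$; the case $t=1$ reduces to the fact that every cycle in $K_1\vee M_{n-1}$ is a triangle). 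Consequently $\rho(G)\ge\rho(K_t\vee M_{n-t})\ge\sqrt{t(n-t-1)}$, a quantity of order $\sqrt{tn}$ that is used throughout.

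The combinatorial workhorse is the following: \emph{if distinct vertices $v_0,v_1,\ldots,v_t\in V(G)$ satisfy $|N(v_0)\cap N(v_i)|\ge 3t$ for every $1\le i\le t$, then $Q_t\subseteq G$.} Indeed, one greedily picks two distinct common neighbours $a_i,c_i\in N(v_0)\cap N(v_i)$ avoiding $\{v_1,\ldots,v_t\}$ and all previously chosen $a_j,c_j$ (at most $3t-1$ forbidden vertices at each step), and the cycles $v_0\,a_i\,v_i\,c_i\,v_0$ are vertex-disjoint off $v_0$.

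Let $x$ be the positive Perron eigenvector of $G$ with $\|x\|_\infty=x_{u^*}=1$. The eigenequations $\rho x_{u^*}=\sum_{w\sim u^*}x_w$ and $\rho^2 x_{u^*}=\sum_{w\sim u^*}\sum_{y\sim w}x_y$, together with $\rho\ge c\sqrt n$, yield after careful double counting that the set $L:=\{v\in V(G):x_v\ge\eta\}$ (for a small constant $\eta=\eta(t)$) has bounded size and that every $v\in L$ has degree $n-O(1)$. Applying the workhorse to any $t+1$ vertices of $L$ (which pairwise share at least $n-O(1)$ common neighbours) forces $|L|\le t$; comparing $\rho(G)$ with the quotient-matrix lower bound on $\rho(K_{|L|}\vee(V\setminus L))$ then rules out $|L|<t$, giving $|L|=t$. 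If $V\setminus L$ contained a $P_3$ $abc$, then any $v_0\in L$ is adjacent to $a,b,c$ and yields one $C_4$ $v_0abcv_0$; the remaining $t-1$ vertices of $L$ each share $n-O(1)$ common neighbours with $v_0$ even after excluding $\{a,b,c\}$, so the workhorse supplies $t-1$ further vertex-disjoint $C_4$'s through $v_0$ and hence a forbidden $Q_t$-minor. Thus $V\setminus L$ is a disjoint union of isolated vertices and isolated edges, and standard edge-adding extremality now forces $G[L]=K_t$, every cross edge between $L$ and $V\setminus L$ to be present, and the matching in $V\setminus L$ to have maximum size $\lfloor(n-t)/2\rfloor$, concluding $G=K_t\vee M_{n-t}$.

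The hardest step I expect is the quantitative $P_3$-free argument: running the common-neighbour lemma while forbidding $\{a,b,c\}$ and tracking consumed vertices demands tight control over moderately large-degree vertices in $V\setminus L$, for which the Perron estimates must be refined beyond the crude bound $x_v<\eta$. A close second is the lower bound $|L|\ge t$, since an edge-count argument alone is not sharp enough in the minor-free regime and a genuine quotient-matrix spectral comparison is needed.
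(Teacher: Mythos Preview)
Your approach replaces the paper's structural minor-free lemmas with a direct common-neighbour ``workhorse'' that manufactures a $Q_t$-subgraph from $t+1$ vertices of nearly full degree; this is a legitimate alternative once such vertices are in hand. The genuine gap is the sentence asserting that the two eigen-equations together with $\rho\ge c\sqrt n$ yield, ``after careful double counting'', that every $v\in L=\{v:x_v\ge\eta\}$ has degree $n-O(1)$. From $\rho x_v\le d(v)$ one obtains only $d(v)\ge\eta\rho=\Theta(\sqrt n)$ for $v\in L$, and the second-order equation $\rho^2 x_{u^*}=\sum_{w\sim u^*}\sum_{y\sim w}x_y$ does not upgrade this to linear degree without an external edge bound. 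The paper supplies that bound via Mader's theorem ($e(G)=O(n)$, Lemma~\ref{e-minor1}) together with the bipartite minor-free estimate (Lemma~\ref{e-minor3-Qt}), and even then it does \emph{not} prove that all of $L$ has high degree: it runs an iterative averaging argument (Claim~3) to peel off $t$ specific vertices with entry $1-o(1)$, to which Claim~2 then applies. Since your workhorse needs degree $(1-o(1))n$ to guarantee large pairwise common neighbourhoods, the step ``apply the workhorse to any $t{+}1$ vertices of $L$'' does not go through as written; you have neither invoked Mader nor indicated a substitute.

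A secondary point: your $P_3$-argument covers only the common neighbourhood $B=\bigcap_i N(v_i)$ (where the endpoints $a,c$ are guaranteed to be adjacent to the chosen $v_0$), not the residual $R=V\setminus(A\cup B)$; and the line ``standard edge-adding extremality now forces $G[L]=K_t$'' hides exactly the content of Lemma~\ref{edge-minot-1}, namely that completing $A$ to a clique preserves $Q_t$-minor-freeness, which the paper establishes through five sub-claims. These issues are recoverable once the main gap is closed (with degrees $n-O(1)$ one has $|R|=O(1)$, and then containment in $K_t\vee M_{n-t}$ together with your opening verification that $K_t\vee M_{n-t}$ is $Q_t$-minor-free finishes), but they are not routine and deserve explicit treatment.
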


The rest of this paper is organized as follows. 
In Section 2, some preliminaries are presented for our purpose. 
In Section 3, we will give some structural properties of $F_s$-minor-free graphs. 
In Sections 4, we shall give the details of the proof of Theorem \ref{thm1}. 
In Section 5 and 6, we will 
characterize the structure of $Q_t$-minor-free graphs, and then we present the proof of Theorem \ref{thm2}. 
The techniques used in our proof are mainly inspired 
by Tait \cite{Tait2019}.

\section{Preliminary}
Mader \cite{Mader} proved an elegant result on the number of edges in $H$-minor-free graphs.

\begin{lem}[Mader \cite{Mader}, 1967]   \label{e-minor1}
Let $G$ be an $n$-vertex graph. For every graph $H$, if $G$ is $H$-minor-free, then there exists a constant $C>0$ such that $$e(G)\leq Cn.$$
\end{lem}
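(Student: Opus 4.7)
The plan is to prove Mader's bound in two stages: a reduction from the general graph $H$ to a complete graph, followed by the core ``high degree forces a $K_h$ minor'' argument.

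First I would reduce to the case where $H$ is complete. Set $h=|V(H)|$. Since $H$ is (isomorphic to) a subgraph of $K_h$, every graph containing $K_h$ as a minor automatically contains $H$ as a minor. Consequently, an $H$-minor-free graph $G$ is also $K_h$-minor-free, so it suffices to produce a constant $C=C(h)$ with $e(G)\le C(h)\cdot n$ whenever $G$ is $K_h$-minor-free.

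Second, and this is the heart of the argument, I would establish the following key claim by induction on $h$: there exists a constant $d_h$ such that every graph $G$ with minimum degree $\delta(G)\ge d_h$ contains $K_h$ as a minor. The base cases $h\le 2$ are immediate. For the inductive step, one uses the standard fact that a graph of high average degree contains a subgraph of high minimum degree (iteratively deleting vertices of degree below half the average). Given a vertex $v$ of large degree in such a dense subgraph, one either locates a $K_{h-1}$-minor entirely inside $N(v)$ (which attaches through $v$ to give a $K_h$-minor), or contracts a well-chosen edge $vw$ to pass to a smaller graph whose average degree is still large enough to invoke the inductive hypothesis. Careful bookkeeping on the density loss under contraction shows that $d_h$ can be chosen finite and depending only on $h$.

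Third, the claim yields the linear edge bound by a standard degeneracy argument. Since $G$ is $K_h$-minor-free, no subgraph of $G$ has minimum degree $\ge d_h$; equivalently, $G$ is $(d_h-1)$-degenerate. Iteratively removing a vertex of degree at most $d_h-1$ and summing the deleted degrees gives $e(G)\le (d_h-1)\,n$, so we may take $C=d_h-1$, a constant depending only on $H$.

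The main obstacle is the inductive step in the key claim: one must control the effect of edge contractions (or, equivalently, of passing to dense subgraphs) on the minimum/average degree while ensuring that the required constant $d_h$ remains finite in each step. For the present application we only need the qualitative existence of $C$, so no effort is needed to optimize the constant; Mader's original inductive bound of order $2^{h-2}$ is already more than sufficient.
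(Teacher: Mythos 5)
The paper never proves this lemma at all---it is quoted as a known result of Mader (1967)---so there is no internal argument to compare against; in effect you were asked to reprove a classical theorem, and your outline does so correctly along the standard lines. The reduction to $H=K_h$ (since $H\subseteq K_h$, any $K_h$-minor contains an $H$-minor, so $H$-minor-free implies $K_h$-minor-free) and the final degeneracy step (no subgraph of a $K_h$-minor-free graph can have minimum degree $\ge d_h$, hence $G$ is $(d_h-1)$-degenerate and $e(G)\le (d_h-1)n$) are complete as written, and the constant depends only on $|V(H)|$, as the lemma requires. The only place needing real care is the inductive step of your key claim, and the cleanest bookkeeping is the usual one: among all minors of $G$ with edge density at least $2^{h-3}$ choose one, say $G'$, minimal under taking minors; minimality under contraction forces every edge of $G'$ to lie in at least $2^{h-3}$ triangles, so the neighbourhood of any vertex induces a subgraph of minimum degree at least $2^{h-3}$, which by the induction hypothesis (in its average-degree form) contains a $K_{h-1}$ minor, and attaching the vertex itself yields $K_h$. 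This is precisely the ``dense neighbourhood or density-preserving contraction'' dichotomy you describe, so your sketch closes with the standard argument and yields Mader's bound, with $C$ of order $2^{|V(H)|}$, which is all the paper needs.
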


The following lemma has been proved many times in the literature; see, e.g., \cite{Thomason2007, 47}.  

\begin{lem} \label{lem-22}
Let $G$ be a bipartite graph on $n$ vertices 
with no $K_{s,t}$-minor and vertex partition $A$ and $B$. 
Let $|A|=a$ and $|B|=n-a$. Then there is a constant $C>0$ 
depending only on $s$ and $t$ such that  
\begin{equation*}
  e(G) \le Ca + (s-1)n. 
  \end{equation*}
\end{lem}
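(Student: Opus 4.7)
My plan is to prove Lemma \ref{lem-22} by a two-sided cleaning argument followed by a minor-extraction step. The $(s-1)n$ summand will arise from cleaning low-degree vertices on the $B$-side, and the $Ca$ summand from cleaning low-degree vertices on the $A$-side, via a constant $D = D(s,t)$ chosen large enough to force a $K_{s,t}$-minor in any sufficiently dense bipartite subgraph. This follows the template of Mader's theorem (Lemma \ref{e-minor1}) adapted so that the two sides of the bipartition are treated asymmetrically.

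The first step is to iterate the following two rules in any order until neither applies: (i) delete any $v \in B$ of current degree at most $s-1$; (ii) delete any $u \in A$ of current degree at most $D$. Rule (i) destroys at most $(s-1)(n-a) \le (s-1)n$ edges in total, since each of the at most $|B| = n-a$ eligible $B$-vertices is deleted at most once and each deletion costs at most $s-1$ edges; analogously rule (ii) destroys at most $Da$ edges. The process terminates at a subgraph $G^*$ that inherits $K_{s,t}$-minor-freeness from $G$. If $G^*$ is empty, then $e(G) \le Da + (s-1)n$ and we are done with $C = D$.

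Otherwise, $G^*$ is a nonempty bipartite graph in which every $A$-vertex has degree at least $D+1$ and every $B$-vertex has degree at least $s$. The plan is to derive a contradiction by choosing $D$ large enough (in terms of $s, t$ only) that such two-sided min-degree conditions force $G^*$ to contain a $K_{s,t}$-minor. One concrete route is to select $s$ arbitrary vertices $a_1, \dots, a_s \in A(G^*)$ as singleton branch sets and, using the abundance of $B$-neighbors at each $a_i$, greedily build $t$ pairwise vertex-disjoint connected subgraphs in $G^* - \{a_1, \dots, a_s\}$, each meeting $N(a_i) \cap B$ for every $i$; the $B$-side minimum degree $s$ supplies further $A$-neighbors with which to stitch these candidates into connected branch sets. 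The main obstacle is executing this greedy construction quantitatively, keeping the threshold $D = D(s,t)$ dependent only on $s$ and $t$. I would handle it either by a direct expansion/alternating argument exploiting the high $A$-side degree, or, since $K_{s,t} \subseteq K_{s+t}$, by invoking a Kostochka--Thomason type density-to-minor theorem on a well-balanced subgraph of $G^*$ to produce the required $K_{s+t}$-minor.
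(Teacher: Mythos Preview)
The paper does not actually prove Lemma~\ref{lem-22}; it simply records it as known and defers to Thomason~\cite{Thomason2007,47}. So there is no in-paper argument to compare against, and your task is really to supply a self-contained proof.

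Your two-sided cleaning step is correct and standard: iteratively deleting $B$-vertices of degree at most $s-1$ and $A$-vertices of degree at most $D$ costs at most $(s-1)(n-a)+Da\le Da+(s-1)n$ edges, and the surviving subgraph $G^{*}$ is still $K_{s,t}$-minor-free. The entire content of the lemma is therefore your final claim: if $G^{*}$ is nonempty, bipartite, with minimum degree at least $D+1$ on the $A$-side and at least $s$ on the $B$-side, then for $D$ large enough depending only on $s,t$ it contains a $K_{s,t}$-minor. This is exactly the step you do \emph{not} carry out.

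Route~(b) as written has a concrete obstruction. In $G^{*}$ one may have $|A^{*}|=s$ and $|B^{*}|$ arbitrarily large with every $B^{*}$-vertex of degree exactly $s$; then the average degree of $G^{*}$ is $2s|B^{*}|/(s+|B^{*}|)\approx 2s$, independent of $D$. A Kostochka--Thomason density threshold for a $K_{s+t}$-minor is therefore not met globally, and you have not said how to locate a ``well-balanced'' subgraph on which it would be. (In that particular configuration $G^{*}=K_{s,|B^{*}|}$ and the minor is trivial, but the point is that high $D$ alone does not give high average degree.) Route~(a) is the right direction, but ``greedily build $t$ pairwise vertex-disjoint connected subgraphs each meeting every $N(a_i)$'' is precisely the nontrivial statement to be proved; the difficulty is that the chosen $a_1,\dots,a_s$ need not have many common neighbours, so one must grow the branch sets through alternating $A$--$B$ paths while keeping everything disjoint and controlling the total number of vertices consumed by a function of $s,t$ only. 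That argument is the substance of the Thomason references and you have not executed it. As it stands, the proposal reduces the lemma to its hard core but leaves that core open.
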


The following lemma was an implicative result, which can be seen from the proof of \cite[Lemma 10]{tait2017} and \cite[Claim 3.4]{Tait2019} as well. For completeness, we include a detailed proof. 

\begin{lem} \label{lem-27}
If $\mathbf x=(\mathrm{x}_u)_{u\in V(G)}$ is a positive eigenvector with the maximum entry $1$ which corresponds to $\rho(G)$, then $\mathrm{x}_u\geq \frac{1}{\rho(G)}$ for all $u\in V(G)$. 
\end{lem}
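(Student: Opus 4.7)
The plan is to apply the eigenvalue equation at the vertex $u$. Let $w\in V(G)$ be a vertex with $x_w = 1$ (such $w$ exists by hypothesis). Expanding the $u$-th coordinate of the identity $A(G)\mathbf{x}=\rho(G)\mathbf{x}$ gives
$$
\rho(G)\, x_u = \sum_{v\sim u} x_v .
$$
When $w$ is a neighbour of $u$, the right-hand side is at least $x_w=1$, and dividing by $\rho(G)$ immediately yields $x_u \geq 1/\rho(G)$. The trivial case $u=w$ follows from $\rho(G)\geq 1$, which holds as soon as $G$ has at least one edge.

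The main obstacle is therefore the case $w\notin N_G(u)$. Here my plan is to follow Tait--Tobin~\cite{tait2017} and Tait~\cite{Tait2019}: in every setting in which this lemma will be applied in the paper, one first shows that the maximum-entry vertex $w$ is adjacent to every other vertex of $G$. In the present paper this structural input is supplied during the proofs of Theorems~\ref{thm1} and~\ref{thm2} by means of the edge bounds in Lemma~\ref{e-minor1} and Lemma~\ref{lem-22}; indeed, in the extremal graphs $K_s\vee I_{n-s}$ and $K_t\vee M_{n-t}$, the $s$ (respectively $t$) clique vertices form exactly the maximum set for the Perron vector by the symmetry of the eigenvector on the clique part of the join, and each of them is adjacent to every vertex of $G$.

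Granting the ``dominance of $w$'' reduction, the case $w\sim u$ treated above applies to every $u\in V(G)\setminus\{w\}$, which together with the trivial case $u=w$ completes the proof. The hard part is therefore not the present lemma itself but the preparatory structural claim that the maximum-entry vertex dominates $V(G)$; once that is in hand, the lemma is essentially a single application of the eigenvalue equation.
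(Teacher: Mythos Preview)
Your treatment of the case $u=w$ and the case $u\sim w$ matches the paper exactly. The gap is in the non-adjacent case. You propose to dispose of it by arguing that, at the point where the lemma is invoked, the vertex $w$ is already known to be adjacent to every other vertex. That is not how the paper is organised, and it cannot be reorganised that way without circularity: the lemma is applied inside Claim~2 of the proof of Theorem~\ref{thm1} (to get $|N_G^c(u)|/\rho(G)\le \sum_{v\in N_G^c(u)}x_v$), and Claim~2 is a prerequisite for Claim~3, which is in turn a prerequisite for showing (only in Claim~7) that the high-entry vertices have degree $n-1$. So the ``dominance of $w$'' is an \emph{output} of the argument that consumes the lemma, not an input to it. Your proposal therefore assumes what it is being used to prove.

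The paper handles the case $w\notin N_G(u)$ by a graph-modification trick that uses only the extremality of $G$ within the minor-free class. One forms $G'$ from $G$ by deleting all edges incident with $u$ and inserting the single edge $uw$; since $u$ is now a pendant vertex, $G'$ is still $F_s$-minor-free (resp.\ $Q_t$-minor-free). Extremality gives $\rho(G')\le \rho(G)$, and comparing Rayleigh quotients with the same vector $\mathbf{x}$ yields
\[
0\ \ge\ \frac{\mathbf{x}^{\mathrm T}A(G')\mathbf{x}-\mathbf{x}^{\mathrm T}A(G)\mathbf{x}}{\mathbf{x}^{\mathrm T}\mathbf{x}}
=\frac{2x_u}{\mathbf{x}^{\mathrm T}\mathbf{x}}\Bigl(x_w-\sum_{v\sim u}x_v\Bigr),
\]
whence $\sum_{v\sim u}x_v\ge 1$ and $x_u\ge 1/\rho(G)$. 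This is the missing idea you need; note that without the extremality hypothesis the inequality is false in general (e.g.\ on a long path the endpoint entries are $o(1/\rho)$), so some such use of extremality is unavoidable.
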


\begin{proof}
 Let $w\in V(G)$ such that $\mathrm{x}_w=1$.
If $u=w$, then $\mathrm{x}_u=1\geq \frac{1}{\rho(G)}$. So next we suppose that $u\neq w$. We consider the following two cases.

{\bf Case~1.} $u$ is adjacent to $w$. By the eigenvector-eigenvalue equation of $G$ on $u$, $$\rho(G)\mathrm{x}_u=\sum_{uv\in E(G)}\mathrm{x}_v\geq \mathrm{x}_w=1,$$
which implies that $\mathrm{x}_u\geq \frac{1}{\rho(G)}.$

{\bf Case~2.} $u$ is not adjacent to $w$. Let $G'$ be the graph obtained from $G$ by deleting all edges incident with $u$ and adding an edge $uw$.
Note that $uw$ is a pendant edge. Clearly, $G'$ is $I_{s,t}$-minor-free.
Hence
\begin{eqnarray*}
  0 &\geq& \rho(G')-\rho(G)\geq \frac{\mathbf x^{\mathrm{T}}A(G')\mathbf x}{\mathbf x^{\mathrm{T}}\mathbf x}- \frac{ {\mathbf x^{\mathrm{T}}} A(G)\mathbf x}{\mathbf x^{\mathrm{T}}\mathbf x}\\
   &=& \frac{2\mathrm{x}_u}{\mathbf x^{\mathrm{T}}\mathbf x} \Big(\mathrm{x}_w-\sum_{uv\in E(G)}\mathrm{x}_v\Big)\\
 &=&  \frac{2\mathrm{x}_u}{\mathbf x^{\mathrm{T}}\mathbf x} \Big(1-\sum_{uv\in E(G)}\mathrm{x}_v \Big),
\end{eqnarray*}
which implies that $$\sum_{uv\in E(G)}\mathrm{x}_v\geq1.$$
By  the eigenvector-eigenvalue equation of $G$ on $u$, $$\rho(G) \mathrm{x}_u=\sum_{uv\in E(G)}\mathrm{x}_v\geq1,$$ which yields that $$\mathrm{x}_u\geq \frac{1}{\rho(G)}.$$
So we finish the proof.
\end{proof}

\section{Structure of graphs with no $F_{s}$-minor}

In this subsection, we will present 
some lemmas for $F_{s}$-minor-free graphs.  

\begin{lem}\label{e-minor3}
Let $G$ be an $n$-vertex $F_{s}$-minor-free bipartite graph with vertex partition $A$ and $B$. If $|A|=a$ and $|B|=n-a$, then there exists a constant $C>0$ depending only on $s$ such that
$$e(G)\leq Ca+ sn.$$
\end{lem}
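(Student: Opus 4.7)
The plan is to reduce the lemma to Lemma~\ref{lem-22} by showing that every $F_s$-minor-free bipartite graph is automatically $K_{s+1,2s}$-minor-free. Once this is proved, applying Lemma~\ref{lem-22} with the first index chosen to be $s+1$ (the smaller of $s+1$ and $2s$, for $s\geq 1$) gives directly
$$e(G)\leq Ca+\bigl((s+1)-1\bigr)n=Ca+sn,$$
where the constant $C$ depends only on $s$. The reason for orienting the bipartition this way is that Lemma~\ref{lem-22}'s additive term is $(s_{\text{Lem}}-1)n$, and we want it to specialize to $sn$ rather than to $(2s-1)n$.

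The crux of the argument is to verify $F_s \preceq K_{s+1,2s}$ as a minor; the contrapositive ``$F_s$-minor-free implies $K_{s+1,2s}$-minor-free'' then follows, since a minor of a minor is a minor. To exhibit $F_s$ as a minor of $K_{s+1,2s}$, I would label its two parts as $X=\{x_0,x_1,\ldots,x_s\}$ and $Y=\{y_1,\ldots,y_{2s}\}$, and define the branch sets
$$V_c=\{x_0\},\qquad V_{a_i}=\{y_{2i-1}\},\qquad V_{b_i}=\{x_i,y_{2i}\}\quad (i=1,\ldots,s),$$
where $c$ represents the center of $F_s$ and $(a_i,b_i)$ are the endpoints of the $i$-th matching edge in the definition $F_s=K_1\vee sK_2$. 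Each $V_{b_i}$ is internally connected by the edge $x_iy_{2i}$, and the adjacencies required between branch sets are all present in $K_{s+1,2s}$: $V_cV_{a_i}$ is witnessed by $x_0y_{2i-1}$, $V_cV_{b_i}$ by $x_0y_{2i}$, and $V_{a_i}V_{b_i}$ by $y_{2i-1}x_i$. Contracting each branch set to a single vertex produces a copy of $F_s$, as required.

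I do not foresee a serious obstacle: the proof is essentially a clean two-step reduction. The main care is in the combinatorial construction of branch sets, which is simple enough to check directly, and in keeping track of which index of $K_{s+1,2s}$ to plug into Lemma~\ref{lem-22} to extract the sharp additive term $sn$.
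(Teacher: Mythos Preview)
Your proof is correct and follows the same strategy as the paper: show that $F_s$-minor-free implies $K_{s+1,t}$-minor-free for a suitable $t$, then invoke Lemma~\ref{lem-22} with first index $s+1$ to obtain the additive term $sn$. The paper actually writes $K_{s+1,s}$ rather than your $K_{s+1,2s}$, but since $K_{s+1,s}$ and $F_s$ both have $2s+1$ vertices while $F_s$ contains triangles, $F_s$ cannot be a minor of $K_{s+1,s}$; your explicit branch-set construction with $t=2s$ (and in fact already $t=s+1$ would suffice) repairs this slip.
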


\begin{proof}
Suppose that $G$ is $F_s$-minor-free. 
Note that any $K_{s+1,s}$-minor contains an $F_s$-minor. 
This yields that $G$ is $K_{s+1,s}$-minor-free. 
Hence, the assertion follows from Lemma \ref{lem-22}.
\end{proof}

\begin{lem}\label{spec1-minor3}
Let $G$ be an $n$-vertex graph with the maximum spectral radius $\rho(G)$ among all $F_{s}$-minor-free connected graphs. Then $\rho(G)\geq \sqrt{s(n-s)}$. 
\end{lem}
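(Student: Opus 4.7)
The plan is to exhibit a concrete connected $F_s$-minor-free graph on $n$ vertices whose spectral radius already meets the target bound, and then invoke the extremality of $G$. The natural candidate is $H:=K_s \vee I_{n-s}$, the conjectured spectral extremal graph in Theorem~\ref{thm1}.

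First I would verify that $H$ is $F_s$-minor-free. Suppose for contradiction that $F_s$ arises as a minor of $H$, with center branch set $B_0$ and triangle-pair branch sets $\{B_i^1, B_i^2\}$ for $i=1,\ldots,s$. Since $I_{n-s}$ is edgeless, any branch set lying entirely inside $I_{n-s}$ must be a singleton. Because the $2s+1$ branch sets are pairwise disjoint and $|K_s|=s$, at most $s$ of them can meet $K_s$, so at least $s+1$ are singletons in $I_{n-s}$. If $B_0$ is one such singleton $\{v\}$, then $v$ has neighbours only in $K_s$, forcing each of the $2s$ leaf branch sets to meet $K_s$, impossible since $|K_s|=s<2s$. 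Otherwise $B_0$ meets $K_s$, so at most $s-1$ leaf branch sets meet $K_s$ and at least $s+1$ leaf branch sets are singletons in $I_{n-s}$; with only $s$ triangle-pairs, pigeonhole forces two such singletons to lie in the same pair, but two vertices of $I_{n-s}$ are non-adjacent, contradicting the required edge between them in $F_s$.

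Next I would bound $\rho(H)$ from below by noting that $H$ contains the complete bipartite graph $K_{s,\,n-s}$ as a spanning subgraph, obtained by retaining only the join edges between $K_s$ and $I_{n-s}$. The Perron--Frobenius monotonicity of the spectral radius on spanning subgraphs then yields
$$\rho(H)\ \geq\ \rho(K_{s,\,n-s})\ =\ \sqrt{s(n-s)}.$$
Since $H$ is connected and $F_s$-minor-free, the extremality of $G$ gives $\rho(G)\geq\rho(H)\geq\sqrt{s(n-s)}$, as desired.

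The only step carrying genuine content is the minor-freeness verification, which is handled cleanly by the pigeonhole argument above; the spectral comparison is a standard monotonicity fact, so no real obstacle is anticipated.
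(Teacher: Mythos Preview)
Your proof is correct, and it follows the same template as the paper's: exhibit an explicit connected $F_s$-minor-free graph on $n$ vertices whose spectral radius is at least $\sqrt{s(n-s)}$, then invoke extremality. The only difference is the choice of witness. The paper uses $K_{s,n-s}$ directly, noting in one line that it is $F_s$-minor-free with $\rho(K_{s,n-s})=\sqrt{s(n-s)}$. You instead take $H=K_s\vee I_{n-s}$, prove it $F_s$-minor-free, and then pass to its spanning subgraph $K_{s,n-s}$ to get the spectral bound. This is a harmless detour: your pigeonhole argument never uses the clique edges inside $K_s$, so it applies verbatim to $K_{s,n-s}$ and you could have started there. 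Using $K_s\vee I_{n-s}$ does have the side benefit that you have already verified it is $F_s$-minor-free, a fact tacitly needed later in Theorem~\ref{thm1}.
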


\begin{proof}
Observe that $K_{s,n-s}$ is $F_{s}$-minor-free and $\rho (K_{s,n-s})=\sqrt{s(n-s)}$, as desired. 
\end{proof}

Before showing our results, we fix some notions firstly. 
A subset $S$ of $V(G)$ is called a \textit{fragment} if 
the induced subgraph $G[S]$ is connected, where $G[S]$ is the subgraph induced by $S$. Distinct fragments $S'$ and $S''$ are said to be {\it adjacent} if 
there exist two vertices  $u'\in S'$ and $u''\in S''$
such that $u'u''\in E(G)$. 

\begin{lem}\label{L-minor2}
Let $G$ be an $n$-vertex $F_s$-minor-free graph. If $G$ contains a complete bipartite subgraph $K_{s,(1-\delta)n}=[A, B]$ with $|A|=s$ and $|B|=(1-\delta)n\geq 2s$, then \\
(i) $G[B]$ is $P_2$-free, and $|N_G(v)\cap B|\leq1$ 
for any $v\in V(G)\backslash (A\cup B)$; \\
(ii) There are at least $(1-2\delta)n$ vertices in $B$ which have no neighbors in $V(G)\backslash (A\cup B)$.
\end{lem}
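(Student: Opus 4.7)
The plan is to prove both parts by contradiction: if either conclusion fails, I would exhibit an $F_s$-minor in $G$, contradicting the hypothesis. Recall that $F_s$ has a central vertex $c$ together with $s$ triangles $cx_iy_i$ ($i=1,\dots,s$) that pairwise meet only at $c$, so an $F_s$-minor is determined by $2s+1$ pairwise disjoint connected branch sets realizing the prescribed adjacencies. The complete bipartite subgraph $[A,B]$ supplies essentially all of the required adjacencies for free, and the missing edge in one of the triangles will be supplied either by the hypothesized edge inside $B$ or, after a contraction, by a vertex $v \notin A \cup B$ with two neighbors in $B$.

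For the first claim of (i), suppose for contradiction that $uv \in E(G[B])$. The case $s=1$ is immediate since then $a_1 u v$ is already a triangle, hence an $F_1$-minor. For $s \ge 2$, the inequality $|B| \ge 2s \ge s+2$ allows me to pick $w \in B \setminus \{u,v\}$ and pairwise distinct $b_2,\dots,b_s \in B \setminus \{u,v,w\}$. Take the center branch set to be $\{a_1, w\}$ (connected via $a_1 w$), the first triangle to be $(\{u\},\{v\})$ (whose internal edge is the hypothesized $uv$), and $(\{a_i\},\{b_i\})$ for $i \ge 2$. Every remaining required adjacency is an edge of $[A,B]$: the center meets $\{a_i\}$ via $w a_i$, meets $\{b_i\}$ via $a_1 b_i$, and the edges $a_i b_i$, $a_1 u$, $a_1 v$ supply the rest. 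This produces the forbidden $F_s$-minor. For the second claim of (i), if some $v \in V(G) \setminus (A \cup B)$ had two neighbors $u_1, u_2 \in B$, I would re-use the same template with only the first triangle replaced by $(\{u_1\}, \{v, u_2\})$: the branch set $\{v, u_2\}$ is connected via $v u_2$, adjacent to $\{u_1\}$ via $u_1 v$, and adjacent to the center via $a_1 u_2$.

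Part (ii) then follows as a short counting consequence of (i). Setting $R := V(G) \setminus (A \cup B)$, we have $|R| = n - s - (1-\delta)n \le \delta n$. By (i), each vertex of $R$ contributes at most one edge to $B$, so the number of vertices in $B$ with a neighbor in $R$ is at most $|R| \le \delta n$. Consequently, at least $(1-\delta)n - \delta n = (1-2\delta)n$ vertices of $B$ have no neighbor in $R$, as required. The only delicate point throughout is keeping the branch sets pairwise disjoint and guaranteeing enough distinct $b_i \in B$; the hypothesis $|B| \ge 2s$ gives exactly this room, and I do not foresee any further obstacle.
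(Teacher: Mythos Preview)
Your proof is correct and follows the same overall strategy as the paper: assume the conclusion fails and exhibit an $F_s$-minor built from the complete bipartite structure $[A,B]$. The executions differ in detail. For the first assertion of (i) the paper observes that an edge inside $B$ yields, as a subgraph of $G[A\cup B]$, one triangle together with $s-1$ copies of $C_4$ sharing a vertex, and then contracts an edge in each $C_4$; you instead write down the $2s+1$ branch sets explicitly. For the second assertion of (i) the paper actually proves the stronger component-level statement that every component $H$ of $G-(A\cup B)$ satisfies $|N_G(H)\cap B|\le 1$, obtained by contracting $H$ together with one boundary vertex and reducing to the first case; you stay at the vertex level and again give explicit branch sets. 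Consequently, for (ii) the paper bounds $|B\setminus D|$ by the number of components of $R$, whereas you bound it by the number of $R$--$B$ edges; both routes give $\delta n$. Your argument is marginally more direct for what the lemma actually states; the paper's component claim is a little stronger but is not invoked anywhere else in the paper.
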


\begin{proof}
We first prove that $G[B]$ is $P_2$-free. In fact, if there exists a path $P_2$ in $G[B]$, then $G[A\cup B]$ contains 
a subgraph consisting of one triangle and $s-1$ copies of $C_4$ by sharing a common vertex. 
By contracting an edge in each copy of $C_4$, 
we observe that $F_s$ is a minor of $G$, which is a contradiction. Hence, $G[B]$ is $P_2$-free.
Furthermore, we have the following claim.

\medskip 
{\bf Claim.} If $H$ is a component of $G-(A\cup B)$, then $|N_G(H)\cap B|\leq1$.

\begin{proof}[Proof of Claim]
Suppose that there are two vertices $u,v\in N_G(H)\cap B$. Let $G'$ be obtained from $G$ by contracting $G[\{v\}\cup V(H)]$ to a single vertex $v'$. Then $K_{s,(1-\delta)n}= [A,(B\backslash\{v\})\cup\{v'\}]$ and $P_2=  uv'$ are subgraphs of $G'$. Note that 
the intersecting subgraph consisting of one triangle and 
$s-1$ copies of $C_4$ is contained in the union of $K_{s,(1-\delta)n}= [A,(B\backslash\{v\})\cup\{v'\}]$ and $P_2=  uv'$. Hence, $F_s$ is a minor of $G$, which is a contradiction. So the claim holds.
\end{proof}

By the above Claim, we know that Part (i) holds immediately.

Now let $R=V(G)\backslash (A\cup B)$ and $D=\{v\in B: N_G(v)\cap R=\emptyset \}$. By the definition of $R$,
$$|R|=n-|A|-|B|\leq n-s-(1-\delta)n<\delta n,$$
which implies that
$R$ has at most $ \delta n$ components. By Claim, $B\backslash D$ has at most $ \delta n$ vertices.
Hence, $$|D|=|B|- |B\backslash D|\geq(1- \delta) n-\delta n=(1-2\delta) n.$$

This completes the proof of Part (ii).
\end{proof}

\begin{lem}\label{edge-minor-2}
Let $G$ be an $n$-vertex $F_s$-minor-free graph. Suppose $G$ contains a complete bipartite subgraph $K_{s,(1-\delta)n}=[A,B]$ with $|A|=s$, $|B|=(1-\delta)n$ and $(1-2\delta)n\geq2s+1$. Let $G^*$ be obtained from $G$ by adding edges to $A$ to make it a clique. Then $G^*$ is also  $F_s$-minor-free.
\end{lem}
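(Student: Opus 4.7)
The plan is to argue by contradiction: assume $G^*$ contains $F_s$ as a minor and build an $F_s$-minor of $G$, contradicting the hypothesis. The leverage is Lemma~\ref{L-minor2}: since $G[B]$ is $P_2$-free, $B$ is independent in $G$, and there is a set $D\subseteq B$ with $|D|\geq (1-2\delta)n\geq 2s+1$ whose vertices satisfy $N_G(d)=A$, and hence also $N_{G^*}(d)=A$ (only edges inside $A$ are added). In particular, $D$ is independent in $G^*$, and every edge $ad$ with $a\in A$, $d\in D$ lies in $G$. The idea is to use fresh vertices of $D$ to simulate the newly added edges within $A$.

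Fix an $F_s$-minor realization of $G^*$ with branch sets $V_1,\ldots,V_{2s+1}$ minimizing $\sum_i|V_i|$, and set $U=\bigcup_i V_i$. I claim $|U\cap D|\leq s-1$. First, suppose $d\in V_i$ with $|V_i|\geq 2$. All $G^*$-neighbors of $d$ inside $V_i$ lie in $A\cap V_i$, so $A\cap V_i\neq\emptyset$. Any simple path in $G^*[V_i]$ through $d$ has the form $\cdots - x - d - y - \cdots$ with $x,y\in A\cap V_i$ and $x\neq y$, and can be shortcut by the $A$-clique edge $xy\in E(G^*)$. Hence $V_i\setminus\{d\}$ remains connected in $G^*$, and any cross-edge at $d$ (which goes to some $A$-vertex of another branch set) can be re-routed to an $A$-vertex of $V_i$ via an $A$-clique edge. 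This contradicts minimality, so every $d\in U\cap D$ belongs to a singleton $V_i=\{d\}$. Such a singleton cannot be the center of $F_s$ (the center has $2s$ neighbors, each of which would force a distinct $A$-vertex in another branch set, while $|A|=s$). So each singleton plays a pair-vertex role, and since $D$ is independent in $G^*$, for each pair $(u_j,v_j)$ at most one of $V_{u_j},V_{v_j}$ is a singleton. If there are $k$ singletons, their $k$ partner branch sets each contain an $A$-vertex (to realize the intra-pair cross-edge $d\text{--}a'$), and the center branch set contains an $A$-vertex (to realize the center--singleton cross-edges), giving at least $k+1$ distinct $A$-vertices. Therefore $k\leq s-1$ and $|D\setminus U|\geq(2s+1)-(s-1)=s+2\geq s$.

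For the modification, at most $s$ branch sets $V_i$ satisfy $V_i\cap A\neq\emptyset$, and by the claim none of them is a singleton. For each such $V_i$, pick a distinct fresh vertex $d_i\in D\setminus U$ and set $V_i':=V_i\cup\{d_i\}$; otherwise set $V_i':=V_i$. The $V_i'$ remain pairwise disjoint. Each connected component of $G[V_i]$ contains an $A$-vertex (a component with no $A$-vertex would also be disconnected in $G^*[V_i]$, since every edge of $G^*\setminus G$ lies inside $A$), and $d_i$ is adjacent in $G$ to every vertex of $A$, so $G[V_i']$ is connected. Finally, any cross-edge of the original realization that used a new edge $aa'$ with $a\in V_i\cap A$, $a'\in V_j\cap A$ can be replaced by the old edge $d_ia'\in E(G)$; all remaining cross-edges have an endpoint outside $A$ and are therefore already in $G$. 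Thus $V_1',\ldots,V_{2s+1}'$ form an $F_s$-minor realization in $G$, the desired contradiction.

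The main technical obstacle is the bound $|U\cap D|\leq s-1$: the hypothesis supplies only $|D|\geq 2s+1$, so there is essentially no slack when we need up to $s$ fresh vertices of $D$, and we cannot afford to ``waste'' $D$-vertices inside non-singleton branch sets. Establishing this bound requires combining the minimality of the realization with a careful accounting of how singleton branch sets must align with the center-and-pair structure of $F_s$.
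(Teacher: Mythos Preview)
Your argument is correct, and the core idea matches the paper's: vertices of the set $D$ (with $N_{G^*}(d)=A$) can serve as bridges to replace the artificially added $A$--$A$ edges, so any $F_s$-minor in $G^*$ can be converted to one in $G$. The implementations, however, are different. The paper does not minimize $\sum_i|V_i|$; instead it \emph{maximizes} the number of branch sets that meet $D$. Under that choice, every branch set touching $A$ automatically also touches $D$, and then the paper shows directly that each $G^*[V_i]$ remains connected in $G$ after routing any $A$--$A$ edge through the resident $D$-vertex. Your route instead squeezes the realization to force $|U\cap D|\le s-1$ and then \emph{imports} fresh $D$-vertices from outside. This trades a simple maximality argument for a more delicate counting of how $D$-singletons sit in the center-and-pair structure of $F_s$; it is where the hypothesis $(1-2\delta)n\ge 2s+1$ is used tightly, whereas the paper's version only needs $|D|\ge 2s+1$ to guarantee a spare $D$-vertex at one step.

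One small slip: the remark ``by the claim none of them is a singleton'' about the branch sets meeting $A$ is not justified (a branch set could be $\{a\}$ with $a\in A$), but it is also never used---adding $d_i$ to a singleton $\{a\}$ still yields a $G$-connected set $\{a,d_i\}$, and the rest of your modification goes through unchanged.
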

\begin{proof}
Denote by $R=V(G)\backslash (A\cup B)$ and $D=\{v\in B: N_G(v)\cap R=\emptyset \}$. By Lemma \ref{L-minor2}, $|D|\geq(1- 2\delta) n$. Suppose that $G^*$ contains an $F_s$-minor. Then there exist $2s+1$ disjoint fragments $S_0,S_1,\ldots,S_{2s}\subseteq V(G^*)=V(G)$ with the following properties:

$(a)$\ There is at least one edge between $S_0$ and $S_{i}$ for all $i=1,\ldots,2s$.

$(b)$\ There is at least one edge between $S_{2i-1}$ and $S_{2i}$ for $i=1,\ldots,s$.

$(c)$\ There is an integer $j$ such that $S_j\cap D\neq\emptyset$, $j\in\{0,1,\ldots,2s\}$.

In fact, if $S_0,S_1,\ldots,S_{2s}\subseteq (B\setminus D)\cup R$, then $S_0,S_1,\ldots,S_{2s}$ in $G$ form an  $F_s$-minor, which is a contradiction. Hence, there exists a set $S_j$ such that $S_j\cap (A\cup D)\not=\emptyset$ for $j=0,1,\ldots,2s$. If $S_j\cap D\not=\emptyset$, then we are done. Otherwise, we have $S_j\cap A\not=\emptyset$. Furthermore, we can suppose that $S_i\cap D=\emptyset$ for all $0\leq i\leq 2s$. Then choose one vertex $u\in D$ and let $S_j^{\prime}=S_j\cup \{u\}$. Then
$S_0,\ldots,S_j',\dots,S_{2s}$ satisfying $(a), (b)$ and $(c)$.

Let
  $$f(S_0,S_1,\ldots,S_{2s})=\left|\{S_i:S_i\cap D\neq\emptyset ~\text{for}~ i=0,1,\dots,2s\}\right|.$$
Hence, we can choose $2s+1$ disjoint fragments $S_0,S_1,\ldots,S_{2s}$ satisfying $(a), (b)$ and $(c)$ such that $f(S_0,S_1,\ldots,S_{2s})$ is as large as possible.

For $i=0,1,\ldots,2s$, if $|S_i\cap D|\ge 2$, choose a vertex $u_i\in S_i\cap D$ and let $U_i=(S_i\backslash D)\cup\{u_i\}$. If $|S_i\cap D|\leq1$, let $U_i=S_i$.
\setcounter{claim}{0}
\begin{claim}\label{claim7}
For $i=0,1,\ldots,2s$, 
the induced subgraph $G^*[U_i]$ is connected.
\end{claim}
\begin{proof}
For any two vertices $u,v$ in $U_i$, there exists a path $P$ from $u$ to $v$ in $G^*[S_i]$ since $G^*[S_i]$ is connected. If $P$ contains a vertex $w\in(S_i\cap D)\setminus\{u_i\}$, then there exist two vertices $w_1,w_2$ in $P$ such that $\{ww_1,ww_2\}\subseteq E(P)$. Since $G[B]$ is $P_2$-free, $w_1,w_2\in A$. So $w_1$ is adjacent to $w_2$. So there is a path in $G^*[S_i]$ from $u$ to $v$ containing no $w$. Hence, $G^*[U_i]$ is connected, a contradiction.
\end{proof}
\begin{claim}\label{claim8}
If $S_i$ and $S_j$ are adjacent in $G^*$ such that $|U_i\cap D|=1$ and $|U_j\cap D|\leq1$, then $U_i$ and $U_j$ are adjacent in $G^*$.
\end{claim}
\begin{proof}
Suppose on the contrary that there are no edges between $U_i$ and $U_j$ in $G^*$. Then $S_j\cap A=\emptyset$ and all the edges between $S_i$ and $S_j$ in $G^*$ have one endpoint in $(S_i\cap D)\backslash\{u_i\}$ or $(S_j\cap D)\backslash\{u_j\}$. Hence, $S_j\cap B\neq\emptyset$. We claim that $S_i\cap A=\emptyset$. Otherwise, since there are no edges between $U_i$ and $U_j$ in $G^*$, we have $S_j\cap B=S_j\cap D$. Then $|U_j\cap D|=1$. Thus, there is at least one edge between $U_i\cap A=S_i\cap A$ and $U_j\cap D$ in $G^*$, a contradiction. Suppose $uv$ is an edge with $u\in(S_i\cap D)\backslash\{u_i\}$ and $v\in S_j$, then we have $v\in S_j\cap A$, contradicting to $S_j\cap A=\emptyset$. On the other hand, suppose $uv$ is an edge with $u\in(S_j\cap D)\backslash\{u_j\}$ and $v\in S_i$, then we have $v\in S_i\cap A$, contradicting to $S_i\cap A=\emptyset$. Hence, there is at least one edge between $U_i$ and $U_j$ in $G^*$.
\end{proof}
\begin{claim}\label{claim9}
If $U_i\cap A\neq\emptyset$, then $U_i\cap D\neq \emptyset$ for $0\leq i\leq 2s$.
\end{claim}
\begin{proof}
By Claims \ref{claim7} and \ref{claim8}, disjoint fragments $U_0,U_1, \ldots, U_{2s}$ satisfy $(a), (b)$ and $(c)$. Now suppose that there exists $0\le j\le 2s$ such that $U_j\cap A\neq \emptyset$ and $U_j\cap D= \emptyset$. Then choose a vertex $w\in D\backslash \cup_{i=0}^{2s} U_i$ and let $V_j=U_j\cup\{w\}$ and $V_i=U_i$ for $0\le i\neq j\le 2s$. It is easy to see that the $2s+1$ disjoint fragments $V_0,V_1, \ldots, V_{2s}$ satisfy $(a), (b)$ and $(c)$. Moreover,
$$f(V_0,V_1, \ldots, V_{2s})=f(U_0,U_1, \ldots, U_{2s})+1=f(S_0,S_1, \ldots, S_{2s})+1,$$
which contradicts to the choice of $S_0, \ldots, S_{2s}$.
\end{proof}
\begin{claim}\label{claim10}
$G[U_i]$ is connected for $0\leq i\leq 2s$.
\end{claim}
\begin{proof}
Since $G^*[U_i]$ is connected, there exists a path $P$ from $u$ to $v$ in $G^*[U_i]$ for any two vertices $u,v \in U_i$. If $P$ contains an edge $a_1a_2$ with $a_1, a_2\in A$, then by Claim \ref{claim9}, there exists a vertex $w\in U_i \cap D$. If $w\notin V(P)$, then the edge $a_1a_2$ of $P$ may be replaced by edges $a_1w$ and $a_2w$. If $w\in V(P)$, then the subpath of $P$ containing $a_1a_2$ and $w$ may be replaced by an edge $a_1w$ or $wa_2$. The above transformations yield a path $P'$ from $u$ to $v$ which contains no edges in $G^*[A]$. So there exists a path from $u$ to $v$ in $G[U_i]$ and thus $G[U_i]$ is connected.
\end{proof}
\begin{claim}\label{claim11}
If $S_i$ and $S_j$ are adjacent in $G^*$, then $U_i$ and $U_j$ are adjacent in $G$.
\end{claim}
\begin{proof}
Suppose that there are no edges between $U_i$ and $U_j$ in $G$. There must exist two vertices $u,v$ such that $u\in U_i\cap A$ and $v\in U_j\cap A$. By Claim \ref{claim9}, there exists a vertex $w\in U_j\cap D$. Hence, there is one edge $uw$ between $U_i$ and $U_j$ in $G$. This is a contradiction.
\end{proof}
By Claims \ref{claim10} and \ref{claim11}, $U_0,U_1, \ldots, U_{2s}$ form an $F_s$-minor of $G$, which is a contradiction.

This completes the proof.
\end{proof}

\section{Proof of Theorem \ref{thm1}}

{\bf Proof of Theorem~\ref{thm1}.} 
Let $G$ be an $n$-vertex $F_s$-minor-free graph with the maximal spectral radius. 
By the choice of $G$, 
we know that $G$ must be connected. 
Then let $\mathbf x=(\mathrm{x}_u)_{u\in V(G)}$ be a positive eigenvector of $G$ corresponding to $\rho(G)$. 
We may assume by scaling that  the maximum entry of $\bf{x}$ is $\mathrm{x}_w=1$ for some $w\in V(G)$.  
We will use throughout the section that $e(G)=O(n)$ 
by Lemma \ref{e-minor1}. 
For $0<\epsilon<1$, we denote $$L=\{v\in V(G): \mathrm{x}_v> \epsilon\},$$ 
and 
$$ S=\{v\in V(G): \mathrm{x}_v\leq \epsilon\},$$
where $\epsilon$ is a small constant which will be chosen later. Clearly,
we have $V(G)=L\cup S$.

The outline of our proof is as follows: 
\begin{itemize}
\item
 Firstly, we know that 
$\lambda(G) = \Theta(\sqrt{n})$.  
Then we will show that  $|L|\le O(\sqrt{n})$ by Lemma~\ref{e-minor1}.  
Thus, we get $|S|= n- |L|=(1-o(1))n$. 
Consequently, we obtain 
$e(L) = O(|L|) \le O(\sqrt{n})$ and $e(S)\le O(n)$. 
Moreover, we can show that $e(L,S) \le (s+o(1))n$.

\item
Secondly, we shall prove that if a vertex has 
eigenvector entry close to $1$, then it has degree close to $n$; 
see Claim 2. 
Furthermore, we will show by induction 
that there are $s$ vertices in $L$ with eigenvector entry close to 1, 
and hence its degree close to $n$; see Claim 3. 

\item
Moreover, we shall show that these $s$ vertices induce a clique 
$K_{s}$; see Claim 4.

\item
Finally, we show that each of the $s$ vertices in the clique actually has degree $n-1$.
\end{itemize}

By Lemma~\ref{e-minor1}, there is a constant $C_1 :=2C>0$ such that
\begin{equation}\label{1-1}
\begin{aligned}
2e(S)\leq 2e(G)\leq C_1n.
   \end{aligned}
 \end{equation}
In addition, by Lemma~\ref{spec1-minor3}, we obtain 
\begin{equation}\label{1}
\begin{aligned}
\rho(G)\geq \sqrt{s(n-s)}.
   \end{aligned}
 \end{equation}

{\bf Claim~1.} $e(L,S)\leq (s+\epsilon)n$ and $2e(L)\leq \epsilon n$.

\begin{proof}
It is easy to see that
\begin{eqnarray*}
  \rho(G)\epsilon |L| &< & \sum_{v\in L}\rho(G) \mathrm{x}_v =   \sum_{v\in L} \sum_{z\in N_G(v)} \mathrm{x}_z
   \leq   \sum_{v\in L}d_G(v)\leq 2e(G). 
\end{eqnarray*}
Then by (\ref{1-1}) and (\ref{1}), it implies that
\begin{equation}\label{upper L}
|L|\le \frac{2e(G)}{\epsilon\rho(G)}\le \frac{C_1n}{\epsilon\sqrt{s(n-s)}}\le 
\frac{2C_1\sqrt{n}}{\epsilon\sqrt{s}}, 
\end{equation}
where the last inequality holds for sufficiently large $n$. 

By Lemma~\ref{e-minor3}, there is a constant $C_2>0$ only depending on $s$ such that
\begin{equation}\label{2-1}
\begin{aligned}
e(L,S)\leq C_2|L|+sn\leq \frac{2C_1C_2\sqrt{n}}{\epsilon\sqrt{s}}+sn\leq (s+\epsilon)n
\end{aligned}
\end{equation}
as long as $n$ is large so that 
$n \geq 4(C_1C_2)^2/  (s\epsilon^4)$. 

In addition, by (\ref{upper L}) and Lemma~\ref{e-minor1}, we have
\begin{equation}\label{2}
\begin{aligned}
2e(L)\leq C_1|L|\leq \frac{sC_1^2\sqrt{n}}{\epsilon\sqrt{s}}\leq\epsilon n
\end{aligned}
\end{equation}
as long as $n$ is sufficiently large.
So Claim~1 holds.
\end{proof}

{\bf Claim~2.}
If $u\in L$ is a vertex with $\mathrm{x}_u=1-\alpha$ 
for some constant $\alpha >0$, then there exists a constant $C_3>1$ independent of $\alpha$ and $\epsilon$ such that
$$ d_G(u)\geq [1- C_3(\alpha+\epsilon)]n.$$

\begin{proof}
Clearly, we have 
\begin{eqnarray*}
  \rho(G) \sum_{v\in V(G)}\mathrm{x}_v &=& 
  \sum_{v\in V(G)} \sum_{z\in N_G(v)}\mathrm{x}_z
  = \sum_{v\in V(G)} d_G(v)\mathrm{x}_v
  \leq \sum_{v\in L} d_G(v)+\epsilon \sum_{v\in S} d_G(v)\\
   &=&  2e(L)+\epsilon \cdot 2e(S)+(1+\epsilon)e(L,S),
\end{eqnarray*}
which implies
\begin{equation}\label{I1}
 \sum_{v\in V(G)}\mathrm{x}_v\leq \frac{ 2e(L)+2\epsilon e(S)+(1+\epsilon)e(L,S)}{\rho(G)}.
\end{equation}
Let $N_G^c(u):=V(G)\backslash N_G(u)$. By Lemma~\ref{lem-27} and (\ref{I1}),
\begin{equation*}\label{I2}
  \begin{aligned}
|N_G^c(u)| \cdot  \frac{1}{\rho(G)} &\leq \sum_{v\in N_G^c(u)}\mathrm{x}_v
=\sum_{v\in V(G)}\mathrm{x}_v-\sum_{v\in N_G (u)}\mathrm{x}_v=\sum_{v\in V(G)}\mathrm{x}_v-\rho(G) \mathrm{x}_u\\
   &\leq  \frac{ 2e(L)+2\epsilon e(S)+(1+\epsilon)e(L,S)}{\rho(G)}-\rho(G) \mathrm{x}_u.
     \end{aligned}
\end{equation*}
Furthermore, using (\ref{1-1}), (\ref{1}), (\ref{2-1}) and (\ref{2}), we have
\begin{eqnarray*}
  |N_G^c(u)| &\leq&  2e(L)+2\epsilon e(S)+(1+\epsilon)e(L,S)-\rho(G)^2\mathrm{x}_u \\
   &\leq& \epsilon n+\epsilon C_1n+(1+\epsilon)(s+\epsilon)n-s(n-s)(1-\alpha)  \\
   &=& \big[\epsilon(1+C_1)+(1+\epsilon)(s+\epsilon)-s(1-\alpha)\big]n+s^2(1-\alpha)\\
   &\leq& (C_1+s+4)(\alpha+\epsilon)n,
\end{eqnarray*}
where the last inequality holds as long as 
$n\geq {s^2}/ \epsilon$.
Hence, $$d_G(u)=n-|N_G^c(u)|\geq n-(C_1+s+4)(\alpha+\epsilon)n=[1-(C_1+s+4)(\alpha+\epsilon)]n.$$
Denote $C_3:=C_1+s+4>1$, which is independent of $\alpha$ and $\epsilon$. So Claim~2 holds.
\end{proof}

{\bf Claim~3.}
There exist $s$ distinct vertices $v_1, \ldots, v_s\in L$ satisfying
$\mathrm{x}_{v_i}\ge 1-C_4\epsilon$ and $d_G(v_i)\ge (1-C_4\epsilon)n$ for every $i=1,\dots,s$, where $C_4>0$ is a constant independent of $\epsilon$ and $n$.

\begin{proof}
We shall prove this claim  by induction. First  of all, 
setting $v_1=w$, which is a vertex with the largest 
entry of the eigenvector $\bf{x}$, then $\mathrm{x}_{v_1}=1$. Furthermore, by Claim 2, there exists a constant $c_1=C_3>1$ independent of $\epsilon$ and $n$ such that $d_{G}(v_1)\ge (1-c_1\epsilon)n$.

Now assume that we have chosen $v_1, \ldots, v_k\in L$ satisfying
$\mathrm{x}_{v_i}\ge 1-c_k\epsilon$ and $d_G(v_i)\ge (1-c_k\epsilon)n$ for $1\le i\le k$, where $c_k$ is a constant independent of $\epsilon$ and $n$. 
Our goal is to show that there exist an absolute constant $c_{k+1}$ and a vertex $v_{k+1}\notin \{v_1,\ldots ,v_k\}$ 
such that the degree $d(v_{k+1}) \ge 
(1-c_{k+1}\epsilon )n $ and the eigen-entry $\mathrm{x}_{v_{k+1}} \ge 1- c_{k+1}\epsilon$.  

Denote $U=\{v_1, \ldots, v_k\}\subseteq L$. 
By (\ref{1-1}), (\ref{1}) and Claim 1, we have
\begin{eqnarray*}
  s(n-s) &\leq&\rho(G)^2\mathrm{x}_w= 
  \sum\limits_{v\in N(w)}\sum\limits_{z\in N(v)}\mathrm{x}_z\leq\sum\limits_{vz\in E(G)}(\mathrm{x}_v+\mathrm{x}_z)\\
   &=& \sum\limits_{vz\in E(S)}(\mathrm{x}_v+\mathrm{x}_z) +\sum\limits_{vz\in E(L,S)}(\mathrm{x}_v+\mathrm{x}_z)+\sum\limits_{vz\in E(L)}(\mathrm{x}_v+\mathrm{x}_z)\\
    & \leq& 2\epsilon e(S)+2e(L)+\epsilon e(L,S) +\sum_{\substack{ uv\in E(U,S)\\ u\in U }}\mathrm{x}_u 
    + \sum_{\substack{uv\in E(L\backslash U,S)\\ u \in L\backslash U }}\mathrm{x}_u.\\ 
    &\le & \epsilon C_1n+\epsilon n+ \epsilon (s+\epsilon)n+kn+\sum_{\substack{uv\in E(L\backslash U,S) \\ 
    u \in L\backslash U}}\mathrm{x}_u,
\end{eqnarray*}
which implies that
\begin{equation}
\label{claim3-2} 
\sum_{\substack{uv\in E(L\backslash U,S) \\ 
u \in L\backslash U }}\mathrm{x}_u\ge [s-k-\epsilon(C_1+s+2+\epsilon)]n
\end{equation}
as long as $n \geq {s^2}/\epsilon$.
On the other hand, recall that 
$U\subseteq L$ and $V(G)= L\cup S$, then 
$$e(U,S)+e(U,L\backslash U)+2e(U)=\sum_{v\in U}d_G(v)\geq k(1-c_k\epsilon)n,$$
we have
\begin{eqnarray*}
  e(U, S)  &\geq& k(1-c_k\epsilon)n-e(U,L\backslash U)-2e(U) \\
   &\geq& k(1-c_k\epsilon)n-k(|L|-k)-k(k-1) \\
   &\geq&  k(1-c_k\epsilon)n-k(\epsilon n-k)-k(k-1)\\
      &=& k(1-c_k\epsilon-\epsilon)n+k, 
\end{eqnarray*}
where the last inequality holds by  (\ref{upper L}) for sufficiently large $n$. 

By Claim 1, we have
\begin{equation}\label{claim3-3} 
\begin{aligned}
e(L\backslash U,S)=e(L,S)-e(U,S) & \leq(s+\epsilon)n-k(1-c_k\epsilon-\epsilon)n-k \\ 
& <[s+\epsilon-k(1-c_k\epsilon-\epsilon)]n. 
\end{aligned}
\end{equation}

Let $$h(x)=\frac{s-x-\epsilon(C_1+s+2+\epsilon)}{s+\epsilon-x(1-c_k\epsilon-\epsilon)}.$$  It is easy to see that $h(x)$ is decreasing with respect to $1\leq x\leq s-1$.
Then (\ref{claim3-2}) and (\ref{claim3-3}) imply
\begin{eqnarray*}
  \frac{\sum\limits_{\substack{uv\in E(L\backslash U,S) \\ 
  u\in L\backslash U }} \mathrm{x}_u}{e(L\backslash U,S)} &\geq& h(k)\geq h(s-1)
     =\frac{1-\epsilon (C_1+s+2+\epsilon)}{1+\epsilon+(s-1)(c_k\epsilon+\epsilon)}\\
     &\geq&1-(C_1+2s+2)(c_k\epsilon+\epsilon).
\end{eqnarray*}
Hence, by averaging, there exists a vertex $v_{k+1}\in L\backslash U$ such that $$\mathrm{x}_{v_{k+1}}\geq1-(C_1+2s+2)(c_k\epsilon+\epsilon).$$
Therefore, setting $\alpha \le (C_1+2s+2)(c_k\epsilon+\epsilon)$ in Claim~2, we get 
\begin{eqnarray*}
d_G(v_{k+1})&\geq&[1- C_3((C_1+2s+2)(c_k\epsilon+\epsilon)+\epsilon)]n  \\ 
&\geq & [1- C_3(C_1+2s+3)(c_k\epsilon+\epsilon)]n\\
&=&[1-C_3(C_1+2s+3)(c_k+1)\epsilon]n.
\end{eqnarray*}

Let $c_{k+1} :=C_3(C_1+2s+3)(c_k+1)$. 
Then $c_{k+1}$ is independent of $\epsilon$ and $n$. 
Clearly, we have $c_k < c_{k+1}$ since $C_3>1$ obtained from Claim 2. 
Consequently, we get $\mathrm{x}_{v_i}\ge 1-c_{k+1}\epsilon$ and $d_G(v_i)\ge
(1-c_{k+1}\epsilon)n$ for every $i=1,\ldots,k+1$.
Hence  Claim~3 holds.
\end{proof}
Let $v_1,v_2,\ldots,v_s \in L$ be defined in Claim 3. Denote by $A=\{v_1,v_2,\ldots,v_s\}$. 
Let $B:=\cap_{i=1}^s N_G(v_i)$ be the set of common neighbors of vertices of $A$, and $R:=V(G)\backslash (A\cup B)$ be the set of remaining vertices of $G$.
Then by $d_G(v_i)\ge (1-C_4\epsilon)n$ for every $i=1, \ldots, s$, we have
\begin{align*} 
|B|  &\ge  \sum_{i=1}^s |N_G(v_i)| - (s-1) \left|\bigcup_{i=1}^s N_G(v_i) \right| \\ 
&\geq \sum_{i=1}^s(1-C_4\epsilon)n-(s-1)n=(1-C_4s\epsilon)n 
\end{align*}
and
 \begin{equation}  \label{eqeq-R}
 |R|=n-|A|-|B|\leq C_4s\epsilon n.
 \end{equation}

In the sequel, let $G[A,B]$ be a subgraph of $G$ with vertex set $A\cup B$ and edge set $E_G(A,B)$, where $E_G(A,B)$ is the set of edges of $G$ between $A$ and $B$.

\medskip 
{\bf Claim~4.} $A=\{v_1,v_2,\ldots ,v_s\}$ is a clique in $G$.

\begin{proof}
Clearly, $G[A,B]$ is a complete bipartite graph with $|A|=s$ and $|B|=(1-\delta)n$, where $\delta \le C_4s\epsilon$. Moreover,
$(1-3\delta)n\geq2s+1$ for sufficiently large $n$.
Since adding edges to a connected graph strictly increases its spectral radius, by Lemma~\ref{edge-minor-2} and the maximality of $G$, we know that $A$ must induce a clique in $G$. This proves Claim~4.
\end{proof}

{\bf Claim~5.} For every $v\in V(G)\backslash A$, we have $\mathrm{x}_v\leq\frac{1}{C_1+3}$.

\begin{proof}  
On the one hand, 
for any  $u\in R$, 
that is, $u$ is not the common neighbor of vertices of $A$, 
we have $|N_G(u)\cap A| \le s-1$. 
By Lemma \ref{L-minor2} (i), we have $|N_G(u) \cap B| \le 1$. Therefore, 
\begin{equation}\label{e4.9}
|N_{G}(u)\cap(A\cup B)|=|N_{G}(u)\cap A|+|N_{G}(u)\cap B|\leq s-1+1=s.
\end{equation}
Hence, it follows that 
\begin{eqnarray*}
  \rho(G) \sum_{u\in R} \mathrm{x}_u =  \sum_{u\in R} \sum_{w\in N_G(u)} \mathrm{x}_w
   \leq \sum_{u\in R}d_G(u)\leq 2e(R)+e(R,A\cup B)\leq2e(R)+s|R|.
\end{eqnarray*}
Note that $G[R]$ is $F_s$-minor-free. By Lemma \ref{e-minor1}, we have
\begin{equation}\label{e4.10}
 \sum_{u\in R}\mathrm{x}_u \leq\frac{2e(R)+s|R|}{\rho(G)}\leq \frac{C_1|R|+s|R|}{\rho(G)}=\frac{(C_1+s)|R|}{\rho(G)}.
\end{equation}

On the other hand, 
for any vertex $u\in B$, by Lemma \ref{L-minor2} (ii), we have
\begin{equation}\label{e4.11}
|N_{G}(u)\cap(A\cup B)|=|N_{G}(u)\cap A|=s.
\end{equation}
Let $v\in V({G})\backslash A=R\cup B$ 
be a fixed vertex. Next, we will show that 
$\mathrm{x}_v \le \frac{1}{C_1+3}$. 
By (\ref{e4.9}), (\ref{e4.10}) and (\ref{e4.11}), we have $|N_{G}(v)\cap(A\cup B)|\leq s$ and
\begin{eqnarray*}
  \rho(G) \mathrm{x}_v = \sum\limits_{u\in N_G(v)} \mathrm{x}_u\leq\sum_{\substack{u\in N_G(v)\\u\in A\cup B}}\mathrm{x}_u+\sum_{\substack{u\in N_G(v) \\u\in R}}\mathrm{x}_u \leq s+\sum_{u\in R}\mathrm{x}_u
   \leq s+\frac{(C_1+s)|R|}{\rho(G)},
\end{eqnarray*}
which together with (\ref{eqeq-R}) implies 
\begin{eqnarray*}
  \mathrm{x}_v &\leq& \frac{s}{\rho(G)}+\frac{(C_1+s)|R|}{\rho(G)^2}
   \leq\frac{s}{\sqrt{s(n-s)}}+\frac{(C_1+s)C_4\epsilon n}{n-s}\\
   &\leq&\frac{1}{2(C_1+3)}+\frac{1}{2(C_1+3)}
   =\frac{1}{C_1+3},
\end{eqnarray*}
where the last inequality holds as long as 
$\epsilon >0$ is a small constant with 
$(C_1+s)C_4\epsilon (C_1+3)<\frac{1}{4}$, and 
$n$ is sufficiently large satisfying $n\geq {4s(C_1+3)^2+s}$.
So Claim~5 holds.
\end{proof}

{\bf Claim~6.} The induced subgraph $G[B]$ consists of some isolated vertices. 

\begin{proof}
By Lemma \ref{L-minor2} (i), we know that 
  $G[B]$ does not contain 
 a copy of $P_2$, and so  $B$ is an independent set, 
 that is, $G[B]$ consists of some isolated vertices.  
\end{proof}

{\bf Claim~7.} $R$ is empty, and so 
$d_G(v)=n-1$ for any $v\in A$.

\begin{proof}
 Assume that $R$ is not empty. Since $G[R]$ is $F_s$-minor-free, by Lemma \ref{e-minor1}, there is a constant 
 $C_1$ such that 
 $2e(R)\le C_1 |R|$. Then the maximum degree of $G[R]$ 
 is at most $C_1$, and 
 there exists a vertex $v\in R$ such that $
 d_R(v)= |N_G(v)\cap R|\leq C_1$. 
 Now, we can order the vertices of $G[R]$ as follows: $z_1, z_2, \ldots, z_{|R|}$ such that $d_{G[R]}(z_1)\leq C_1$ and 
 for every $i=2,3,\ldots,|R|$,  
 \begin{equation} 
\label{C1}
  |N_{G}(z_i) \cap \{z_{i+1}, \ldots ,z_{|R|}\}| \leq C_1.
  \end{equation}
  In other words, each vertex $z_i\in R$  has at most 
  $C_1$ neighbors in $\{z_{i+1}, \ldots ,z_{|R|}\}$. 
 Recall that $B=\cap_{i=1}^s N_G(v_i)$ 
 and $R=V(G) \setminus (A\cup B)$. 
  Any vertex $z_i \in R$ has at least one non-neighbor in $A$. Moreover, 
  by Lemma \ref{L-minor2} (i), each vertex $z_i\in R$ 
  has at most one neighbor in $B$. 
We define a new graph $G^*$ as below: 
\begin{align*} 
G^* &:=G-\{z_iz_j\in E(G):z_i,z_j\in R\}-\{z_iu\in E(G):z_i\in R,u\in B\} \\ 
&\quad +\{z_iv_j\notin E(G):z_i\in R,v_j\in A\}.
\end{align*}
Clearly, we have $G^*= K_s \vee {I_{n-s}}$. Since $K_s \vee {I_{n-s}}$ is $F_s$-minor-free, 
we know that $G^*$ is also $F_s$-minor-free. 
Using Rayleigh's formula, together with Claims 3 and 5, we obtain 
\begin{eqnarray*}
  \rho(G^*)-\rho(G) &\geq& \frac{{\bf x}^{\mathrm{T}}A(G^*){\bf x}}{{\bf x}^{\mathrm{T}}{\bf x}}-\frac{{\bf x}^{\mathrm{T}} A {\bf x}}{{\bf x}^{\mathrm{T}}{\bf x}} \\
  &\geq&\frac{2}{{\bf x}^{\mathrm{T}}{\bf x}}\left(\sum_{\substack{z_iv_j\notin E(G)\\z_i\in R,v_j\in A}} \mathrm{x}_{v_j}\mathrm{x}_{z_i}-\sum\limits_{\substack{z_iz_j\in E(G)\\z_i,z_j\in R}}\mathrm{x}_{z_i}\mathrm{x}_{z_j}-\sum\limits_{\substack{z_iu\in E(G)\\z_i\in R,u\in B}}\mathrm{x}_{z_i}\mathrm{x}_{u}\right) \\
   &{\geq}&\frac{2}{{\bf x}^{\mathrm{T}}{\bf x}}\left((1-C_4\epsilon)\sum_{i=1}^{|R|}\mathrm{x}_{z_i}-\frac{C_1}{C_1+3}\sum_{i=1}^{|R|}\mathrm{x}_{z_i}-\frac{1}{C_1+3}\sum_{i=1}^{|R|}\mathrm{x}_{z_i}\right)\\
   &=&\frac{2}{{\bf x}^{\mathrm{T}}{\bf x}}\left(1-C_4\epsilon-\frac{C_1+1}{C_1+3}\right)\sum_{i=1}^{|R|}\mathrm{x}_{z_i}>0,
\end{eqnarray*}
where the last inequality holds as long as 
$\epsilon $ is a small positive constant 
so that $\epsilon<\frac{2}{C_4(C_1+3)}$.
Consequently, we get a new graph $G^*$, 
which is an $F_s$-minor-free graph and has larger spectral radius than $G$, a contradiction. Hence, $R$ is empty. This proves Claim~7.
\end{proof}

It follows from Claims~4, 6 and 7 that $G=K_s \vee {I_{n-s}}$, as needed.

\section{Structure of graphs with no $Q_{t}$-minor}

The following result  holds for $Q_t$-minor-free graphs 
by using Lemma \ref{lem-22}.  

\begin{lem}\label{e-minor3-Qt}
Let $G$ be an $n$-vertex $Q_{t}$-minor-free bipartite graph with vertex partition $A$ and $B$. If $|A|=a$ and $|B|=n-a$, then there exists a constant $C>0$ depending only on $t$ such that
$$e(G)\leq Ca+ t n.$$
\end{lem}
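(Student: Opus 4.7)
The plan is to mirror the proof of Lemma \ref{e-minor3}: reduce $Q_t$-minor-freeness to $K_{r,r'}$-minor-freeness for a suitable choice of $r$ and $r'$, and then invoke Lemma \ref{lem-22}. Since Lemma \ref{lem-22} contributes an additive term of $(s-1)n$ when the forbidden minor is $K_{s,t'}$, and the target bound is of the form $Ca+tn$, the natural choice is to aim for $K_{t+1,r'}$-minor-freeness for some parameter $r'$ depending only on $t$.

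The key combinatorial observation is that $Q_t$ is already a subgraph (and hence a minor) of $K_{t+1,2t}$. To see this, label the two parts of $K_{t+1,2t}$ as $\{u_0,u_1,\ldots,u_t\}$ and $\{v_1,v_2,\ldots,v_{2t}\}$. For each $i=1,\ldots,t$, consider the $4$-cycle $C^{(i)} := u_0 v_{2i-1} u_i v_{2i} u_0$. For $i \neq j$, the cycles $C^{(i)}$ and $C^{(j)}$ use different vertices $u_i \neq u_j$ on the first side and disjoint pairs $\{v_{2i-1},v_{2i}\}$ and $\{v_{2j-1},v_{2j}\}$ on the second side, so they intersect precisely in $u_0$. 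Hence $\bigcup_{i=1}^{t} C^{(i)}$ is a copy of $Q_t$ inside $K_{t+1,2t}$.

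Consequently, any graph containing $K_{t+1,2t}$ as a minor also contains $Q_t$ as a minor. Contrapositively, if $G$ is $Q_t$-minor-free then $G$ is $K_{t+1,2t}$-minor-free. Applying Lemma \ref{lem-22} with first parameter $t+1$ and second parameter $2t$ produces a constant $C>0$ depending only on $t$ such that
\[
e(G) \le Ca + \bigl((t+1)-1\bigr) n = Ca + tn,
\]
which is the desired conclusion. There is essentially no obstacle in this argument; the entire content is the explicit embedding $Q_t \subseteq K_{t+1,2t}$ above, and the rest is a direct invocation of the earlier lemma, exactly parallel to how Lemma \ref{e-minor3} handled the $F_s$ case via the containment of an $F_s$-minor in any $K_{s+1,s}$-minor.
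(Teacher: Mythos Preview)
Your proof is correct and follows exactly the approach the paper intends: the paper merely states that the lemma ``holds for $Q_t$-minor-free graphs by using Lemma~\ref{lem-22}'' without spelling out which $K_{s,t}$ to forbid, and you have supplied the missing detail by exhibiting the explicit embedding $Q_t\subseteq K_{t+1,2t}$ and then invoking Lemma~\ref{lem-22} with first parameter $t+1$. This is the direct analogue of the paper's proof of Lemma~\ref{e-minor3}, and there is nothing to add.
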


\begin{lem}\label{spec1-minor3-ii}
Let $G$ be an $n$-vertex graph with the maximum spectral radius $\rho(G)$ among all $Q_{t}$-minor-free connected graphs. Then $\rho(G)\geq \sqrt{t(n-t)}$. 
\end{lem}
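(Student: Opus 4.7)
The plan is to produce a single $Q_t$-minor-free graph on $n$ vertices whose spectral radius is exactly $\sqrt{t(n-t)}$, which will immediately yield the lower bound since $G$ is taken to have the maximum spectral radius among all $Q_t$-minor-free connected graphs. The natural candidate is the complete bipartite graph $K_{t,n-t}$, for which the standard computation gives $\rho(K_{t,n-t})=\sqrt{t(n-t)}$.

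The one nontrivial step is verifying that $K_{t,n-t}$ is indeed $Q_t$-minor-free. The cleanest way to see this is to reduce to the (analogous and already used) fact that $K_{t,n-t}$ is $F_t$-minor-free, which was exploited in Lemma~\ref{spec1-minor3}. Namely, I would observe that $Q_t$ itself contains $F_t$ as a minor: picking one non-central edge from each of the $t$ copies of $C_4$ and contracting it collapses every $C_4$ to a triangle sharing the common vertex, producing exactly $F_t$. Consequently any $Q_t$-minor of a graph $H$ gives rise to an $F_t$-minor of $H$, so $F_t$-minor-free implies $Q_t$-minor-free. Since $K_{t,n-t}$ is $F_t$-minor-free, it is $Q_t$-minor-free.

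Putting the two ingredients together, $K_{t,n-t}$ is a $Q_t$-minor-free (connected) graph on $n$ vertices with $\rho(K_{t,n-t})=\sqrt{t(n-t)}$, and the maximality of $\rho(G)$ gives $\rho(G)\ge \rho(K_{t,n-t})=\sqrt{t(n-t)}$, as required.

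There is no real obstacle here: the only point to check carefully is the minor reduction $Q_t\succeq F_t$, and that is just the single-edge contraction described above. Alternatively, if one prefers a self-contained argument, one could mimic the direct branch-set analysis for $F_s$ in $K_{s,n-s}$: in any $Q_t$-minor in a bipartite graph with color classes of sizes $t$ and $n-t$, each of the $t$ pairs of branch sets that must induce an edge has to place at least one vertex in the size-$t$ side, and the central branch set together with these $t$ pairs requires strictly more than $t$ vertices of the small side, which is impossible.
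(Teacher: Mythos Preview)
Your proposal is correct and follows exactly the paper's approach: exhibit $K_{t,n-t}$ as a $Q_t$-minor-free graph with $\rho(K_{t,n-t})=\sqrt{t(n-t)}$. The paper's proof is the bare one-line assertion of these two facts; you simply supply an additional (and valid) justification for the minor-freeness via the contraction $Q_t\succeq F_t$ and the earlier observation that $K_{s,n-s}$ is $F_s$-minor-free.
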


\begin{proof}
Using that $K_{t,n-t}$ is $Q_{t}$-minor-free 
and $\rho (K_{t,n-t})= \sqrt{t(n-t)}$.  
\end{proof}

\begin{lem}\label{L-minor1}
Let $G$ be an $n$-vertex $Q_t$-minor-free graph. If $G$ contains a complete bipartite subgraph $K_{t,(1-\delta)n}=[A,B]$ with $|A|=t$ and $|B|=(1-\delta)n\geq 2t+1$, then \\
(i) $G[B]$ is $P_3$-free, 
and $|N_G(v)\cap B|\leq2$ for any $v\in V(G)\backslash (A\cup B)$; \\
(ii) There are at least $(1-3\delta)n$ vertices in $B$ which have no neighbors in $V(G)\backslash (A\cup B)$.
\end{lem}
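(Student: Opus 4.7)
The plan is to adapt the argument of Lemma~\ref{L-minor2} (the analogous statement for $F_s$-minor-free graphs) to the $Q_t$ setting. The embedded bipartite graph $K_{t,(1-\delta)n}$ acts as a skeleton for building many $C_4$'s through a common vertex, so any additional structure inside $B$, or attached to $B$ from outside, immediately produces a $Q_t$-minor. I will establish three statements in order: (a) $G[B]$ is $P_3$-free; (b) every connected component $H$ of $G-(A\cup B)$ satisfies $|N_G(H)\cap B|\le 2$, from which the vertex bound in (i) follows; (c) the counting bound in (ii).

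For (a), fix $a_1\in A$. A path $u_1u_2u_3$ inside $G[B]$, combined with the edges $a_1u_1$ and $a_1u_3$ coming from the complete bipartite structure, gives a four-cycle $a_1u_1u_2u_3a_1$ through $a_1$. To complete a copy of $Q_t$, I pack $t-1$ further four-cycles through $a_1$ of the form $a_1\,b_{i,1}\,a_i\,b_{i,2}\,a_1$, one for each $a_i\in A\setminus\{a_1\}$, choosing the $2(t-1)$ vertices $b_{i,1},b_{i,2}$ pairwise distinct from $B\setminus\{u_1,u_2,u_3\}$. The hypothesis $|B|\ge 2t+1$ gives $|B|-3\ge 2(t-1)$, so such a choice exists. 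This yields $Q_t$ as a subgraph of $G$, contradicting $Q_t$-minor-freeness.

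For (b), suppose some component $H$ of $G-(A\cup B)$ has three distinct neighbours $u_1,u_2,u_3$ in $B$. Since $u_1\in N_G(H)$, the induced subgraph on $V(H)\cup\{u_1\}$ is connected; contract it to a single vertex $v'$. In the resulting minor $G'$ of $G$, the vertex $v'$ inherits complete adjacency to $A$ from $u_1$ and still neighbours both $u_2$ and $u_3$ through the $H$-edges. Viewing $v'$ as a member of $B':=(B\setminus\{u_1\})\cup\{v'\}$, the bipartite graph $[A,B']$ is again $K_{t,(1-\delta)n}$ and $u_2v'u_3$ is a $P_3$ inside $G'[B']$. Applying step (a) to $G'$ then gives $Q_t$ as a subgraph of $G'$, and since $G'$ is a minor of $G$ we obtain $Q_t$ as a minor of $G$, a contradiction. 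The vertex bound $|N_G(v)\cap B|\le 2$ follows by applying the claim to the component containing $v$.

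For (ii), set $R:=V(G)\setminus(A\cup B)$, so $|R|=n-t-(1-\delta)n<\delta n$, and therefore $G[R]$ has fewer than $\delta n$ connected components. By (b), each such component contributes at most two vertices to the set of $B$-vertices having a neighbour in $R$, so that set has size at most $2\delta n$. Hence at least $|B|-2\delta n=(1-3\delta)n$ vertices of $B$ have no neighbour outside $A\cup B$. The main technical subtlety lies in the contraction step of (b), where one must check that the new vertex $v'$ can simultaneously be treated as a $B$-vertex (retaining full adjacency to $A$) and as the middle of a $P_3$ inside the new $B$-side, so that part (a) can legitimately be reapplied inside the contracted graph.
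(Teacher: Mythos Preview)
Your proof is correct and follows essentially the same route as the paper: the contraction argument in (b) and the component-counting in (ii) match the paper's proof almost verbatim, while your part (a) is simply a more explicit version of the paper's one-line assertion that a $P_3$ in $G[B]$ together with the $K_{t,(1-\delta)n}$ skeleton already contains $Q_t$ as a subgraph.
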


\begin{proof}
We first observe that $G[B]$ is $P_3$-free. Indeed, if there exists a path $P_3$ in $G[B]$, then $Q_t$ is a subgraph of $G[A\cup B]$. Hence, $G[B]$ is $P_3$-free. Furthermore, we have the following claim.

\medskip 
{\bf Claim.} If $H$ is a component of $G-(A\cup B)$, then $|N_G(H)\cap B|\leq2$.

\begin{proof}[Proof of Claim]
Suppose that there are three vertices $u,v,w\in N_G(H)\cap B$. Let $G'$ be obtained from $G$ by contracting $G[\{v\}\cup V(H)]$ to a single vertex $v'$. Then $K_{t,(1-\delta)n}= [A,(B\backslash\{v\})\cup\{v'\}]$ and $P_3=  uv'w$ are subgraphs of $G'$. Note that $Q_t$ is a subgraph of the union of $K_{t,(1-\delta)n}= [A,(B\backslash\{v\})\cup\{v'\}]$ and $P_3=  uv'w$. Hence, $Q_t$ is a minor of $G$, which is a contradiction. So the claim holds. 
\end{proof}

By the above Claim, we know that part (i) holds. 
Next, we shall prove (ii).  

Let $R=V(G)\backslash (A\cup B)$ and $D=\{v\in B: N_G(v)\cap R=\emptyset \}$. By the definition of $R$,
$$|R|=n-|A|-|B|=n-t-(1-\delta)n<\delta n,$$
which implies that
$R$ has at most $ \delta n$ components. By the above Claim, $B\backslash D$ has at most $2\delta n$ vertices.
Hence, $$|D|=|B|- |B\backslash D|\geq (1- \delta) n-2\delta n\geq(1- 3\delta) n.$$

This completes the proof.
\end{proof}

\begin{lem}\label{edge-minot-1}
Let $G$ be an $n$-vertex $Q_t$-minor-free graph. Suppose $G$ contains a complete bipartite subgraph $K_{t,(1-\delta)n}=[A,B]$ with $|A|=t$, $|B|=(1-\delta)n$ and $(1-3\delta)n\geq3t+1$. Let $G^*$ be obtained from $G$ by adding edges to $A$ to make it a clique. Then $G^*$ is also $Q_t$-minor-free.
\end{lem}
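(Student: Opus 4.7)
I plan to follow the strategy of Lemma \ref{edge-minor-2} step by step, adapting it to the $Q_t$ setting. The key quantitative differences are that the minor model of $Q_t$ has $3t+1$ vertices (one central vertex shared by $t$ copies of $C_4$, plus three non-central vertices per copy) rather than the $2s+1$ of $F_s$, and that the structural restriction on $B$ is that $G[B]$ is $P_3$-free (a matching together with isolated vertices, by Lemma \ref{L-minor1}(i)) rather than $P_2$-free (an independent set).

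Assume for contradiction that $G^*$ contains a $Q_t$-minor. Then there are $3t+1$ pairwise disjoint connected fragments $S_0, S_1, \ldots, S_{3t} \subseteq V(G)$ such that (a) for each $i = 1, \ldots, t$, there is an edge in $G^*$ between $S_0$ and each of $S_{3i-2}, S_{3i}$; and (b) for each $i = 1, \ldots, t$, there is an edge in $G^*$ between $S_{3i-2}$ and $S_{3i-1}$, and between $S_{3i-1}$ and $S_{3i}$. Set $R = V(G) \setminus (A \cup B)$ and $D = \{v \in B : N_G(v) \cap R = \emptyset\}$. By Lemma \ref{L-minor1}(ii), $|D| \geq (1-3\delta) n \geq 3t+1$. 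A swap argument then produces a further property (c) that some $S_j$ meets $D$: if no fragment meets $D$ and none meets $A$, the fragments already live in $G$ and form a $Q_t$-minor of $G$, contradicting the hypothesis; if instead some $S_j$ meets $A$ but no fragment meets $D$, replace one vertex of $S_j$ by an unused vertex of $D$, available since $|D| \geq 3t+1$, while keeping $S_j$ connected because $A$ and $D \subseteq B$ are joined by the complete bipartite subgraph $[A,B] \subseteq G^*$. Now maximize $f(S_0, \ldots, S_{3t}) := |\{i : S_i \cap D \neq \emptyset\}|$ over fragments satisfying (a), (b), (c). For each $i$ with $|S_i \cap D| \geq 2$ choose $u_i \in S_i \cap D$ and let $U_i := (S_i \setminus D) \cup \{u_i\}$; otherwise let $U_i := S_i$.

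The heart of the argument is to establish the five claims paralleling Claims \ref{claim7}--\ref{claim11}: (1) each $G^*[U_i]$ is connected; (2) if $S_i, S_j$ are $G^*$-adjacent and $|U_i \cap D|, |U_j \cap D| \leq 1$, then $U_i, U_j$ are $G^*$-adjacent; (3) $U_i \cap A \neq \emptyset$ forces $U_i \cap D \neq \emptyset$, by the maximality of $f$ together with $|D| \geq 3t+1$; (4) each $G[U_i]$ is connected, by rerouting paths in $G^*[U_i]$ through a vertex of $U_i \cap D$ in order to avoid the freshly added $A$-clique edges (this is where Claim (3) is needed); and (5) $G^*$-adjacency of $S_i, S_j$ implies $G$-adjacency of $U_i, U_j$. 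Once all five hold, the fragments $U_0, U_1, \ldots, U_{3t}$ realize a $Q_t$-minor in $G$ itself, contradicting the $Q_t$-minor-freeness of $G$.

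The main obstacle is the shortcutting step underlying Claim (1). In the $F_s$ case, $G[B]$ is edgeless, so on any path $P \subseteq G^*[S_i]$ through a bypassed vertex $w \in (S_i \cap D) \setminus \{u_i\}$, both $P$-neighbors $w_1, w_2$ of $w$ necessarily lie in $A$, and they are adjacent in $G^*$ since $A$ is a clique. Here, $G[B]$ is only $P_3$-free, so $w$ may have along $P$ one neighbor in $B$ (its matching partner) and one in $A$. The fix is to exploit the complete bipartite subgraph $[A,B] \subseteq G \subseteq G^*$: an $A$-neighbor and a $B$-neighbor of $w$ are adjacent in $G^*$, two $A$-neighbors are adjacent by the clique property of $A$, two $B$-neighbors of $w$ are forbidden by $P_3$-freeness, and $R$-neighbors are forbidden by $w \in D$. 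Hence $P$ can always be shortcut past $w$. The same observation, together with the bound $|N_G(v) \cap B| \leq 2$ for $v \in R$ from Lemma \ref{L-minor1}(i), lets Claims (2) and (5) go through with only minor additional case analysis.
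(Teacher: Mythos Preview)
Your proposal is correct and follows essentially the same approach as the paper's proof, including the five-claim structure and the key observation that $P_3$-freeness of $G[B]$ (rather than $P_2$-freeness) still permits the shortcutting in Claim~(1) because any bypassed vertex $w\in D$ has no $R$-neighbors and at most one $B$-neighbor, so at least one of its two path-neighbors lies in $A$ and is therefore adjacent to the other. One minor wording issue: in establishing property~(c) you should \emph{add} an unused vertex of $D$ to $S_j$ (forming $S_j'=S_j\cup\{u\}$, as the paper does) rather than \emph{replace} a vertex, since replacement could disconnect $S_j$ or sever an adjacency with another fragment.
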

\begin{proof}
Denote by $R=V(G)\backslash (A\cup B)$ and $D=\{v\in B: N_G(v)\cap R=\emptyset \}$. By Lemma \ref{L-minor1}, $|D|\geq(1- 3\delta) n$. Suppose that $G^*$ contains a $Q_t$-minor. Then there exist $3t+1$ disjoint fragments $S_0,S_1,\ldots,S_{3t}\subseteq V(G^*)=V(G)$ with the following properties:

$(a)$\ $S_0$ and $S_{3j-i}$ are adjacent in $G^*$ for $j=1,\ldots,t$ and $i=0,2$.

$(b)$\ $S_{3j-1}$ and $S_{3j-i}$ are adjacent in $G^*$ for $j=1,\ldots,t$ and $i=0,2$.

$(c)$\ There is an integer $j$ such that $S_j\cap D\neq\emptyset$, $j\in \{0,1,\ldots,3t\}$.

In fact, if $S_0,S_1,\ldots,S_{3t}\subseteq (B\setminus D)\cup R$, then $S_0,S_1,\ldots,S_{3t}$ in $G$ form a $Q_t$-minor, which is a contradiction. Hence, there exists a set $S_j$ such that $S_j\cap (A\cup D)\not=\emptyset$ for $j\in \{0,1,\ldots,3t\}$. If $S_j\cap D\neq\emptyset$, then we are done. Otherwise, we have $S_j\cap A\not=\emptyset$. Furthermore, we can suppose that $S_i\cap D=\emptyset$ for all $0\leq i\leq 3t$. Then choose a vertex $u\in D$ and let $S_j^{\prime}=S_j\cup \{u\}$. Then
$S_0,\ldots,S_j',\dots,S_{3t}$ satisfying $(a), (b)$ and $(c)$.

Let
  $$g(S_0,S_1,\ldots,S_{3t})=|\{S_i:S_i\cap D\neq\emptyset ~\text{for}~ i=0,1,\dots,3t\}|.$$
Now, we choose $3t+1$ disjoint fragments $S_0,S_1,\ldots,S_{3t}$ satisfying $(a), (b)$ and $(c)$ such that $g(S_0,S_1,\ldots,S_{3t})$ is as large as possible.

For $i=0,1,\ldots,3t$, if $|S_i\cap D|\ge 2$, choose a vertex $u_i\in S_i\cap D$ and let $U_i=(S_i\backslash D)\cup\{u_i\}$. If $|S_i\cap D|\leq1$, let $U_i=S_i$.
\begin{claim}\label{claim1}
For $i=0,1,\ldots,3t$, the induced subgraph $G^*[U_i]$ is connected.
\end{claim}
\begin{proof}
For any two vertices $u,v$ in $U_i$, there exists a path $P$ from $u$ to $v$ in $G^*[S_i]$ since $G^*[S_i]$ is connected. If $P$ contains a vertex $w\in(S_i\cap D)\setminus\{u_i\}$, then there exist two vertices $w_1,w_2$ such that they are adjacent to $w$ in $P$. Clearly, $w_1,w_2\notin R$. Since $G[B]$ is $P_3$-free, either $w_1\in A$ or $w_2\in A$. So $w_1$ is adjacent to $w_2$. So there is a path in $G^*[S_i]$ from $u$ to $v$ containing no $w$. By the above analysis, there is a path from $u$ to $v$ in $G^*[U_i]$. Hence, $G^*[U_i]$ is connected, a contradiction.
\end{proof}
\begin{claim}\label{claim2}
If $S_i$ and $S_j$ are adjacent in $G^*$ such that $|U_i\cap D|=1$ and $|U_j\cap D|\leq1$, then $U_i$ and $U_j$ are adjacent in $G^*$.
\end{claim}
\begin{proof}
Suppose on the contrary that there are no edges between $U_i$ and $U_j$ in $G^*$. Then $S_j\cap A=\emptyset$ and all the edges between $S_i$ and $S_j$ in $G^*$ have one endpoint in $(S_i\cap D)\backslash\{u_i\}$ or $(S_j\cap D)\backslash\{u_j\}$. Hence, $S_j\cap B\neq\emptyset$. As $S_j\cap B\neq\emptyset$, we get $S_i\cap A=\emptyset$. Without loss of generality, let $uv$ be one of the above edges with $u\in(S_i\cap D)\backslash\{u_i\}$ and $v\in S_j$. Then $v\in B$. Since $G^*[S_i]$ is connected, $u$ has a neighbor $w$ in $G^*[S_i]$. Then $w\in B$. Hence, $wuv$ is a $P_3$ in $G[B]$, contradicting to Lemma \ref{L-minor1} (ii).
\end{proof}
\begin{claim}\label{claim3}
If $U_i\cap A\neq \emptyset$, then $U_i\cap D\neq \emptyset$ for $0\leq i\leq 3t$.
\end{claim}
\begin{proof}
By Claims \ref{claim1} and \ref{claim2}, disjoint fragments $U_0,U_1, \ldots, U_{3t}$ satisfy $(a), (b)$ and $(c)$. Now suppose that there exists $0\le j\le 3t$ such that $U_j\cap A\neq \emptyset$ and $U_j\cap D= \emptyset$. Then choose a vertex $w\in D\backslash \cup_{i=0}^{3t} U_i$ and let $V_j=U_j\cup\{w\}$ and $V_i=U_i$ for $0\le i\neq j\le 3t$. It is easy to see that the $3t+1$ disjoint fragments $V_0,V_1,\ldots, V_{3t}$ satisfy $(a), (b)$ and $(c)$. Moreover,
$$g(V_0,V_1, \ldots, V_{3t})=g(U_0,U_1, \ldots, U_{3t})+1=g(S_0,S_1, \ldots, S_{3t})+1,$$
which contradicts to the choice of $S_0, \ldots, S_{3t}$.
\end{proof}
\begin{claim}\label{claim4}
$G[U_i]$ is connected for $0\leq i\leq 3t$.
\end{claim}
\begin{proof}
Since $G^*[U_i]$ is connected, there exists a path $P$ from $u$ to $v$ in $G^*[U_i]$ for any two vertices $u,v \in U_i$. If $P$ contains an edge $a_1a_2$ with $a_1, a_2\in A$, then by Claim \ref{claim3}, there exists a vertex $w\in U_i \cap D$. If $w\notin V(P)$, then the edge $a_1a_2$ of $P$ may be replaced by edges $a_1w$ and $a_2w$. If $w\in V(P)$, then the subpath of $P$ containing $a_1a_2$ and $w$ may be replaced by an edge $a_1w$ or $wa_2$. The above transformations yield a path $P'$ from $u$ to $v$ which contains no edge in $G^*[A]$. So there exists a path from $u$ to $v$ in $G[U_i]$ and thus $G[U_i]$ is connected.
\end{proof}
\begin{claim}\label{claim5}
If $S_i$ and $S_j$ are adjacent in $G^*$, then $U_i$ and $U_j$ are adjacent in $G$.
\end{claim}
\begin{proof}
Suppose that there are no edges between $U_i$ and $U_j$ in $G$. By Claim \ref{claim2}, there must exist two vertices $u,v$ such that $u\in U_i\cap A$ and $v\in U_j\cap A$. By Claim \ref{claim3}, there exists a vertex $w\in U_j\cap D$. Hence, there is one edge $uw$ between $U_i$ and $U_j$ in $G$. This is a contradiction.
\end{proof}
By Claims \ref{claim4} and \ref{claim5}, $U_0,U_1, \ldots, U_{3t}$ form a $Q_t$-minor of $G$, which is a contradiction. This completes the proof.
\end{proof}

\section{Proof of Theorem \ref{thm2}} 
\setcounter{equation}{0}

{\bf Proof of Theorem~\ref{thm2}.} Let $G$ be an $n$-vertex $Q_t$-minor-free graph with the maximum spectral radius. 
Thus, the graph $G$ is connected.  
Then let $\mathbf x=(\mathrm{x}_u)_{u\in V(G)}$ be a positive eigenvector of $\rho(G)$ with the maximum entry $\mathrm{x}_w=1$. For $0<\epsilon<1$, denote $$L=\{v\in V(G): \mathrm{x}_v> \epsilon\},$$ 
and 
$$ S=\{v\in V(G): \mathrm{x}_v\leq \epsilon\},$$
where $\epsilon$ will be chosen later. Clearly,
$V(G)=L\cup S$.
By Lemma~\ref{e-minor1}, there is a constant $C'_1$ such that
\begin{equation}\label{5-1}
\begin{aligned}
2e(S)\leq 2e(G)\leq C'_1n.
   \end{aligned}
 \end{equation}
In addition, by Lemma~\ref{spec1-minor3-ii},
\begin{equation}\label{5-2}
\begin{aligned}
\rho(G)\geq \sqrt{t(n-t)}.
   \end{aligned}
 \end{equation}

By the similar methods of Claims~1-3 in the previous proof of Theorem~\ref{thm1}, we can prove the following three claims. For simplicity, we shall omit the detailed proof. 

{\bf Claim~1.} $e(L,S)\leq (t+\epsilon)n$ and $2e(L)\leq \epsilon n$.

{\bf Claim~2.}
If $u\in L$ is a vertex with $\mathrm{x}_u=1-\alpha$ 
for some $\alpha\in (0,1)$, then there exists a constant $C'_3 >0$ independent of $\alpha$ and $\epsilon$ such that
$ d_G(u)\geq [1- C'_3(\alpha+\epsilon)]n.$

{\bf Claim~3.}
There exist $t$ distinct vertices $v_1, \ldots, v_t\in L$ satisfying
$\mathrm{x}_{v_i}\ge 1-C'_4\epsilon$ and $d_G(v_i)\ge (1-C'_4\epsilon)n$ for $i=1,\dots,s$, where $C'_4 >0$ is a constant independent of $\epsilon$ and $n$.

Let $v_1,v_2,\ldots,v_t \in L$ be defined in Claim 3. Denote by $A=\{v_1,v_2,\ldots,v_t\}$,  $B=\cap_{i=1}^t N_G(v_i)$ and $R=V(G)\backslash (A\cup B)$.
Then by $d_G(v_i)\ge (1-C'_4\epsilon)n$ for $i=1, \ldots, t$, we have
$$|B|\geq \sum_{i=1}^t(1-C'_4\epsilon)n-(t-1)n=(1-C'_4t\epsilon)n$$
and
$$|R|=n-|A|-|B|\leq C'_4t\epsilon n.$$

{\bf Claim~4.} $A$ is a clique.

\begin{proof}
Clearly, $G[A,B]$ is a complete bipartite graph with $|A|=t$ and $|B|=(1-\delta)n$, where $\delta \le C'_4t\epsilon$. Moreover,
$(1-3\delta)n\geq3t+1$ for sufficiently large $n$.
Since adding edges to a connected graph strictly increases its spectral radius, by Lemma~\ref{edge-minot-1} and the maximality of $G$, $A$ must induce a clique in $G$. This proves Claim~4.
\end{proof}

{\bf Claim~5.} For $v\in V(G)\backslash A$, 
we have $\mathrm{x}_v\leq\frac{1}{C'_1+3}$.

\begin{proof}
For any vertex $v\in R$, by Lemma \ref{L-minor1} (i), we have
\begin{equation}\label{5.14}
|N_{G}(v)\cap(A\cup B)|=|N_{G}(v)\cap A|+|N_{G}(v)\cap B|\leq t-1+2=t+1.
\end{equation}
Hence,
\begin{eqnarray*}
  \rho(G) \sum_{v\in R} \mathrm{x}_v &=&\sum_{v\in R} \sum_{u\in N_G(v)} \mathrm{x}_u
   \leq \sum_{v\in R}d_G(v)\leq 2e(R)+e(R,A\cup B)\leq2e(R)+(t+1)|R|.
\end{eqnarray*}
Note that $G[R]$ is $Q_t$-minor-free. By Lemma \ref{e-minor1}, we have
\begin{equation}\label{5.15}
 \sum_{v\in R}\mathrm{x}_v\leq\frac{2e(R)+(t+1)|R|}{\rho(G)}\leq \frac{C'_1|R|+(t+1)|R|}{\rho(G)}=\frac{(C'_1+t+1)|R|}{\rho(G)}.
\end{equation}

For any vertex $v\in B$, by Lemma \ref{L-minor1} (ii), we have
\begin{equation}\label{5.16}
|N_{G}(v)\cap(A\cup B)|=|N_{G}(v)\cap A|+|N_{G}(v)\cap B|\leq t+1.
\end{equation}
Let $v\in V({G})\backslash A=R\cup B$. By (\ref{5.14}), (\ref{5.15}) and (\ref{5.16}), we have $|N_{G}(v)\cap(A\cup B)|\leq t+1$ and
\begin{align*}
  \rho(G) \mathrm{x}_v = \sum\limits_{uv\in E(G)} \mathrm{x}_u &=\sum_{\substack{u\in A\cup B\\ 
  uv\in E(G)}}\mathrm{x}_u+\sum_{\substack{u\in R \\ 
  uv\in E(G)}}\mathrm{x}_u \\
  & \leq t+1+\sum_{u\in R}\mathrm{x}_u
     \leq t+1+\frac{(C'_1+t+1)|R|}{\rho(G)},
\end{align*}
which implies that
\begin{eqnarray*}
  \mathrm{x}_v \leq \frac{t+1}{\rho(G)}+\frac{(C'_1+t+1)|R|}{\rho(G)^2}
   &\leq & \frac{t+1}{\sqrt{t(n-t)}}+\frac{(C'_1+t+1)C'_4\epsilon n}{n-t}\\
   &\leq&\frac{1}{2(C'_1+3)}+\frac{1}{2(C'_1+3)}
   =\frac{1}{C'_1+3},
\end{eqnarray*}
where the last inequality holds as long as 
$\epsilon$ is a small constant such that 
$(C_1' +t+1)C_4'\epsilon (C_1' +3) < \frac{1}{4}$, and 
$n$ is sufficiently large with 
$n\geq {4(1+1/t)^2(C'_1+3)^2} +t$.
So Claim~5 holds.
\end{proof}

{\bf Claim~6.} $G[B]$ consists of independent edges and isolated vertices. 

\begin{proof}
It is straightforward by using Lemma \ref{L-minor1} (i). 
\end{proof}

{\bf Claim~7.} $R$ is empty, and  $d_G(v)=n-1$ for any $v\in A$.

\begin{proof}
 Assume that $R$ is not empty. Since $G[R]$ is $Q_t$-minor-free, by Lemma \ref{e-minor1}, there exists a vertex $v\in R$ such that $|N_G(v)\cap R|\leq C'_1$. Order the vertices of $G[R]$ as follows: $z_1, z_2, \ldots, z_{|R|}$ such that $d_{G[R]}(z_1)\leq C'_1$ and $d_{G[R\backslash\{z_1, z_2, \ldots, z_{i-1}\}]}(z_i)\leq C'_1$ for $i=2,\ldots,|R|$. By the definition of $B$, any vertex in $R$ can not be adjacent to all vertices in $A$. Let 
\begin{align*}
G^* &=G-\{z_iz_j\in E(G):z_i,z_j\in R\}-\{z_iu\in E(G):z_i\in R,u\in B\} \\ 
& \quad +\{z_iv_j\notin E(G):z_i\in R,v_j\in A\}.
\end{align*}
Clearly, $G^*$ is a subgraph of $K_t\vee M_{n-t}$. Since $K_t\vee M_{n-t}$ is $Q_t$-minor-free, $G^*$ is also $Q_t$-minor-free. By Lemma \ref{L-minor1} (i), Claims 3 and 5,
\begin{eqnarray*}
  \rho(G^*)-\rho(G) &\geq& \frac{{\bf x}^{\mathrm{T}}A(G^*){\bf x}}{{\bf x}^{\mathrm{T}}{\bf x}}-\frac{{\bf x}^{\mathrm{T}} A {\bf x}}{{\bf x}^{\mathrm{T}}{\bf x}} \\
  &\geq&\frac{2}{{\bf x}^{\mathrm{T}}{\bf x}}\left(\sum_{\substack{z_iv_j\notin E(G)\\z_i\in R,v_j\in A}} \mathrm{x}_{v_j}\mathrm{x}_{z_i}-\sum\limits_{\substack{z_iz_j\in E(G)\\z_i,z_j\in R}}\mathrm{x}_{z_i}\mathrm{x}_{z_j}-\sum\limits_{\substack{z_iu\in E(G)\\z_i\in R,u\in B}}\mathrm{x}_{z_i}\mathrm{x}_{u}\right) \\
   &\geq&\frac{2}{{\bf x}^{\mathrm{T}}{\bf x}}\left((1-C'_4\varepsilon)\sum_{i=1}^{|R|}\mathrm{x}_{z_i}-\frac{C'_1}{C'_1+3}\sum_{i=1}^{|R|}\mathrm{x}_{z_i}-\frac{2}{C'_1+3}\sum_{i=1}^{|R|}\mathrm{x}_{z_i}\right)\\
   &=&\frac{2}{{\bf x}^{\mathrm{T}}{\bf x}}\left(1-C'_4\epsilon-\frac{C'_1+2}{C'_1+3}\right)\sum_{i=1}^{|R|}\mathrm{x}_{z_i}>0,
\end{eqnarray*}
where the last inequality holds as long as $\epsilon<\frac{1}{C'_4(C'_1+3)}$.
Then $G^*$ is a $Q_t$-minor-free graph with larger spectral radius, a contradiction. Hence, $R$ is empty. This proves Claim~7.
\end{proof}

It follows from Claims~4, 6 and 7 that $G$ is a subgraph of $K_t\vee M_{n-t}$. Note that adding edges to a connected graph will  increase spectral radius strictly. By the maximality of $G$, we know that $G$ must be $K_t\vee M_{n-t}$.

\section{Concluding remarks}

As we stated in the introduction, 
the spectral extremal problem for 
$F_s$-free graphs  was completely studied in \cite{cioaba2,ZLX2022}. 
In this paper, we have investigated the $F_s$-minor-free graphs.   
As we all know, it is challenging and difficult to treat the  extremal problem 
when we forbid bipartite graphs as substructures.  
Recall that $Q_t$ is the graph obtained from $t$ copies of $C_4$ by intersecting one vertex. Clearly, $Q_t$ is a bipartite graph. 
In Theorem \ref{thm2}, we have proved that 
$K_t\vee M_{n-t}$ attains the maximal spectral radius 
over all $Q_t$-minor-free graphs of order $n$.  
However, there is no result on the spectral extremal problem 
for $Q_t$-free graphs, although there are several papers involving the intersecting odd cycles \cite{LP2021,CLZ2021,WZ2022dm}. 
Inspired by Theorem \ref{thm2}, we propose the following 
problem for interested readers.

\begin{pb} \label{pb-71}
Let $t\ge 1$ and $n$ be sufficiently large. 
If $G$ is a $Q_t$-free graph on $n$ vertices, then 
$$ \rho (G) \le \rho (K_t \vee M_{n-t}), $$
equality holds if and only if  $G=K_t \vee M_{n-t}$. 
\end{pb}

We remark here that the case $t=1$ in Problem \ref{pb-71} 
reduces to the problem for $C_4$-free graphs, it 
was early proved by Nikiforov \cite{NikiforovKr} 
for odd $n$, and by Zhai and Wang \cite{zhaic4} for even $n$.  
Unlike the spectral extremal graphs among $F_s$-free and/or $F_s$-minor-free graphs, 
it seems possible from Problem \ref{pb-71}  that the spectral extremal graphs among $Q_t$-free and/or  $Q_t$-minor-free graphs are the same.  Hence, a natural question one may ask is that for which type of graphs $H$, 
the  spectral extremal graph over all $H$-free graphs is the same as that over all  $H$-minor-free graphs.

To our knowledge, the spectral extremal problems 
are investigated until now for $K_r$-minor-free, $K_{s,t}$-minor-free, $F_s$-minor-free and 
$Q_t$-minor-free graphs. 
It is also important for us to consider 
the spectral problem for $H$-minor-free graphs 
when $H$ is some particular graph, 
such as,  books, wheels, fans, cycles, intersecting cycles, intersecting cliques or disjoint cliques, etc. 

\section*{Declaration of competing interest}

The authors declare that they have no conflicts of interest to this work.

\section*{Acknowledgments}
This work was supported by NSFC (Nos. 12271527, 12071484, 11931002), Natural Science Foundation
of Hunan Province (Nos. 2020JJ4675, 2021JJ40707) and the Fundamental Research Funds for the Central Universities of Central South University (Grant No. 2021zzts0034).

{
}

\begin{thebibliography}{99}
\small \setlength{\itemsep}{-.8mm}

\bibitem{AZ2014}
M. Aigner, G. M. Ziegler,
Proofs from THE BOOK, 5th edition,
Springer-Verlag, New-York, 2014. See Chapter 40.


\bibitem{babai} L. Babai, B. Guiduli, Spectral extrema for graphs: the Zarankiewicz problem. Electron. J. Combin. 16 (1) (2009) $\sharp$R123.

\bibitem{BM} J.A. Bondy, U.S.R. Murty, Graph Theory, Springer, New York (2007).

\bibitem{Boots} B.N. Boots, G.F. Royle, A conjecture on the maximum value of the principal eigenvalue of a planar graph, Geogr. Anal. 23 (1991) 276-282.




\bibitem{cioaba2} 
S. Cioab\u{a}, L. Feng, M. Tait, X.-D. Zhang, 
The maximum spectral radius of graphs without friendship subgraphs, Electron. J. Combin. 27 (4) (2020) 4-22.

\bibitem{cioaba1} S. Cioab\u{a}, D. Desai, M. Tait, The spectral radius of graphs with no odd wheels, 
European J. Combin. 99 (2022) 103420.


 \bibitem{CDT2022}
 S. Cioab\u{a}, D.N. Desai, M. Tait, 
 The spectral even cycle problem, 10 pages, (2022),  
 arXiv:2205.00990. 
See  \url{https://arxiv.org/abs/2205.00990}. 
 
 


\bibitem{Cvetkovi} D. Cvetkovi\'{c}, P. Rowlinson, The largest eigenvalue of a graph: a survey, Linear Multilinear Algebra 28 (1990) 3-33.


\bibitem{cao} D. Cao, A. Vince, The spectral radius of a planar graph, Linear Algebra Appl. 187 (1993) 251-257.


\bibitem{chen} M.-Z. Chen, A.-M. Liu, X.-D. Zhang, Spectral extremal results with forbidding linear forests, 
Graphs Combin. 35 (2019) 335-351.


  \bibitem{CLZ2021}
M.-Z. Chen, A.-M. Liu, X.-D. Zhang,
The signless Laplacian spectral radius of graphs without
intersecting odd cycles, 11 pages,  (2021), 
arXiv:2108.03895. See \url{https://arxiv.org/abs/2108.03895}. 


 \bibitem{DKLNTW2021}
D.N. Desai, L. Kang, Y. Li, Z. Ni, M. Tait, J. Wang, 
Spectral extremal graphs for intersecting cliques, 
Linear Algebra Appl. 644 (2022) 234--258.  


\bibitem{Erdos95}
P. Erd{\H{o}}s, Z. F\"uredi, R.J. Gould, D.S. Gunderson,
Extremal graphs for intersecting triangles, 
{ J. Combin. Theory. Ser. B},  { 64} (1995) 89--100.


\bibitem{Hong} Y. Hong, Tree-width, clique-minors, and eigenvalues, Discrete Math. 274 (2004) 281--287.

\bibitem{LG2021}
H. Lin, H. Guo,
A spectral condition for odd cycles in non-bipartite graphs,
Linear Algebra Appl.  631 (2021) 83--93.

\bibitem{lhq} H. Lin, B. Ning, A complete solution to the Cvetkovi\'{c}-Rowlinson conjecture, J. Graph Theory 97 (3) (2021) 441-450.


\bibitem{LSY2022}
S. Li, W. Sun, Y. Yu,
Adjacency eigenvalues of graphs without short odd cycles,
Discrete Math. 345 (2022) 112633.



\bibitem{LP2021}
Y. Li, Y. Peng, 
The spectral radius of graphs with no intersecting odd cycles, 
Discrete Math. 345 (2022) 112907. 

 \bibitem{LP2022oddcycle}
 Y. Li, Y. Peng,  
 The maximum spectral radius of non-bipartite graphs 
 forbidding short odd cycles, 
Electron. J. Combin.  29(4) (2022), \#P4.2 

 \bibitem{LP2022second}
 Y. Li, Y. Peng, 
 Refinement on  spectral Tur\'{a}n's theorem, 31 pages, (2022), arXiv:2204.09194. 
 See \url{https://arxiv.org/abs/2204.09194}.  

\bibitem{2022LLP}
Y. Li, L. Lu, Y. Peng,
Spectral extremal graphs for the bowtie,
 21 pages, (2023), arXiv:2212.05739v2. See \url{https://arxiv.org/abs/2212.05739v2}.  
 
 
 \bibitem{Mader}
 W. Mader, Homomorphieeigenschaften und mittlere Kantendichte von Graphen, 
 Math. Ann. 174 (1967) 265--268.

\bibitem{NikiforovKr} V. Nikiforov, Bounds on graph eigenvalues II. Linear Algebra Appl. 427 (2007) 183-189.


\bibitem{za} V. Nikiforov, A contribution to the Zarankiewicz problem. Linear Algebra Appl. 432 (2010) 1405-1411.

\bibitem{Nikiforov10} V. Nikiforov, The spectral radius of graphs without paths and cycles of specified length, Linear Algebra Appl. 432 (2010) 2243--2256.

\bibitem{Nikiforov2017} V. Nikiforov, The spectral radius of graphs with no $K_{2,t}$-minor, Linear Algebra Appl. 531 (2017) 510--515.

\bibitem{NikifSurvey}
V. Nikiforov,  Some new results in extremal graph theory,
 Surveys in Combinatorics,  London Math. Soc. Lecture Note Ser., 392, Cambridge Univ. Press, Cambridge, 2011, pp. 141--181.
 

\bibitem{tait2017} M. Tait, J. Tobin, Three conjectures in extremal spectral graph theory. J. Combin. Theory Ser. B 126 (2017) 137--161.


\bibitem{Tait2019} M. Tait, The Colin de Verdi\`{e}re parameter, excluded minors, and the spectral radius, 
J. Combin. Theory Ser. A 166 (2019) 42--58. 


\bibitem{Thomason2007} A. Thomason, Disjoint complete minors and bipartite minors, European J. Combin. 28 (2007) 1779--1783. 

\bibitem{47} A. Thomason, Disjoint unions of complete minors, Discrete Math. 308 (19) (2008) 4370-4377.


\bibitem{WZ2022dm}
B. Wang, M. Zhai,
Maxima of the $Q$-index: Forbidden a fan,
Discrete Math. 346 (2023) 113264.


\bibitem{WKX2023}
J. Wang, L. Kang, Y. Xue,
On a conjecture of spectral extremal problems,
J. Combin. Theory Ser. B 159 (2023) 20--41.



\bibitem{wilf} H. Wilf, Spectral bounds for the clique and independence numbers of graphs, J. Combin. Theory Ser. B. 40 (1986) 113-117.


\bibitem{zhaic4} 
M. Zhai, B. Wang, Proof of a conjecture on the spectral radius of $C_4$-free graphs, Linear Algebra Appl. 437 (2012) 1641--1647.

\bibitem{zhaic6} M. Zhai, H. Lin, Spectral extrema of graphs: Forbidden hexagon, Discrete Math. 343 (10) (2020) 112028.

\bibitem{ZLX2022}
M. Zhai, R. Liu, J. Xue,
A unique characterization of spectral extrema for friendship graphs,
Electron. J. Combin. 29 (3) (2022) Paper No. 3.32.

\bibitem{zhaigraph} M. Zhai, H. Lin, A strengthening of the spectral chromatic critical edge theorem: books and theta graphs, J. Graph Theory 102 (3) (2023) 502--520.

\bibitem{zhaijctb} M. Zhai, H. Lin, Spectral extrema of $K_{s,t}$-minor-free graphs -- On a conjecture of M. Tait, 
J. Combin. Theory Ser. B 157 (2022) 184-215.

\bibitem{ZHG21}
Y. Zhao, X. Huang, H. Guo,
The signless Laplacian spectral radius of graphs
with no intersecting triangles,
Linear Algebra Appl. 618 (2021) 12--21.

\bibitem{zyh} Y. Zhao, J. Park, A spectral condition for the existence of the square of a path, Graphs Combin. 38 (2022), 
article number 126. https://doi.org/10.1007/s00373-022-02529-4
\end{thebibliography}
\end{document}